\title[Sufficient conditions for compactness]{Sufficient conditions for compactness of the $\dbar$-Neumann operator on high level forms}
\author{YUE ZHANG}
\address{Department of Mathematics, Building 21 Room 408-12, Zhe Jiang Normal University, Jin Hua, P.R.China, 321004}
\email{yzhangmath@zjnu.edu.cn}
\date{}
\keywords{$\bar\partial$-Neumann operator, compactness estimates, pseudoconvex domains.}
\subjclass[2020]{32W05, 35N15}
\chardef\bslash=`\\ % p. 424, TeXbook
\newtheorem{thm}{Theorem}[section]
\newtheorem*{thm*}{Theorem} %thm w/o no
\newtheorem{cor}[thm]{Corollary}
\newtheorem{lem}[thm]{Lemma}
\newtheorem{prop}[thm]{Proposition}
\theoremstyle{definition}
\newtheorem{defn}{Definition}[section]
\newtheorem*{defn*}{Definition} %defn w/o no
\theoremstyle{remark}
\newtheorem{rem}{Remark}[section]
\newtheorem{example}{Example}[section]
\newcommand{\thmref}[1]{Theorem~\ref{#1}}
\newcommand{\lemref}[1]{Lemma~\ref{#1}}
\newcommand{\corref}[1]{Corollary~\ref{#1}}
\newcommand{\dbar}{\overline{\partial}}
\newcommand{\dbarad}{\overline{\partial}^*}
\newcommand{\dbaradp}{\overline{\partial}^*_{\varphi}}
\newcommand{\dbaradphi}{\overline{\partial}^*_{\phi}}
\newcommand{\dbaradl}{\overline{\partial}^*_{\lambda}}
\newcommand{\sumprime}{\sideset{}{'}\sum}
\newcommand{\pq}{(P_q)}
\newcommand{\pqt}{(\widetilde{P_q})}
\DeclareMathOperator{\dom}{\mathrm{dom}}
\newcommand{\deli}{\delta_{\omega_i}}
\newcommand{\delj}{\delta_{\omega_j}}
\DeclareMathOperator{\supp}{\mathrm{supp}}
\newcommand{\eval}[2][\right]{\relax
  \ifx#1\right\relax \left.\fi#2#1\rvert}
\begin{document}

\markboth{Sufficient conditions for compactness}
{Sufficient conditions for compactness}
\begin{abstract}
By establishing a unified estimate of the twisted Kohn-Morrey-H\"{o}rmander estimate and the $q$-pseudoconvex Ahn-Zampieri estimate, we discuss variants of Property $\pq$ of Catlin and Property $\pqt$ of McNeal  on the boundary of a smooth pseudoconvex domain in $\mathbb{C}^n$ for certain high level forms.  These variant conditions on the one side, imply $L^2$-compactness of the $\dbar$-Neumann operator on the associated domain, on the other side, are different from the classical Property $\pq$ and Property $(\widetilde{P_q})$.   As an application of our result, we show that if the Hausdorff $(2n-2)$-dimensional measure of the weakly pseudoconvex  points on the boundary of a smooth bounded pseudoconvex domain is zero, then the $\dbar$-Neumann operator $N_{n-1}$ is $L^2$-compact on $(0,n-1)$-level forms.  This result generalizes Boas and Sibony's results on $(0,1)$-level forms.
\end{abstract}
\maketitle
\renewcommand{\sectionmark}[1]{}

\section{Introduction}
Given a bounded domain $\Omega$ in $\mathbb{C}^n$, one of the most important problems in the $\dbar$-Neumann theory is to study whether there exists a bounded inverse of the complex Laplacian $\square_q$ on $L^2_{(0,q)}(\Omega)$ $(1\leq q\leq n)$ and if there exists such a bounded inverse operator, what the regularity property it has.  The (bounded) inverse of $\square_q$ is called the $\dbar$-Neumann operator, and we denote it by $N_q$.  Kohn and Nirenberg (\cite{kohnner}) showed that  compactness of $N_q$ implies  global regularity of $N_q$.  Given that  compactness of $N_q$ has a quantified $L^2$ estimate and is a local property (see for example \cite{stra1}, Proposition 4.4), analysis on compactness of $N_q$ on $L^2$-integrable forms is more robust and has its own interest.  We refer the reader to \cite{cat4}, \cite{siqi2}, \cite{hefer}, \cite{henkin}, \cite{sal} and \cite{ven} for a number of useful consequences of compactness of $N_q$.

Based on Catlin's work (\cite{cat5}), Property $\pq$ implies compactness of $N_q$ on smooth pseudoconvex domains.  Property $\pq$ requires the existence of a family of bounded  functions with additional conditions on the sum of eigenvalues of their complex Hessians near the boundary of the domain.  McNeal (\cite{mcneal2}) introduced Property $(\widetilde{P_q})$ by replacing the boundedness condition in Property $\pq$ with the boundedness condition on the gradient in the metric induced by the complex Hessian of functions.  While it is clear that on the level of individual functions, Property $(\widetilde{P_q})$ is weaker than Property $\pq$, it is not clear on the level of function families whether Property $(\widetilde{P_q})$ is weaker than Property $\pq$ or not.  We also refer the reader to \cite{stra1}, section 4.10 for useful background and remarks regarding McNeal's Property $(\widetilde{P_q})$.

On the other hand, there are numerous results which generalize  regularity properties of $N_q$ on bounded pseudoconvex domains to a large class of bounded non-pseudoconvex domains in $\mathbb{C}^n$.  A typical domain in this regard is the $q$-pseudoconvex domain which can be traced back to \cite{ho}.  In \cite{ahn} and \cite{zam}, Ahn and Zampieri established the Kohn-Morrey-H\"{o}rmander estimate on $q$-pseudoconvex domains and proved regularity results of the $\dbar$-problem on such domains.  In \cite{kh1}, Khanh and Zampieri proved a sufficient condition for the subelliptic estimate of the $\dbar$-Neumann problem on $q$-pseudoconvex domains which generalizes a well-known result of Catlin (\cite{cat3}).  In \cite{kh2}, the same authors proved a sufficient condition for compactness estimate of the $\dbar$-Neumann problem on $q$-pseudoconvex domains, where the condition is a generalization of Property $\pq$ and Property $\pqt$ on the above domains.

The purposes of this article are to study the possibility of weakening or varying the conditions on  eigenvalues of the complex Hessian in the definition of Property $\pq$ and Property $\pqt$, and to introduce new sufficient conditions for  $L^2$-compactness of $N_q$ on smooth bounded pseudoconvex domains.  

\subsection{Property $(P_q^\#)$ and Property $(\widetilde{P}_{q}^\#)$}
\label{v2_sub1.2}

The main innovation in this paper is introducing variants of Property $\pq$ and Property $\pqt$, which still imply compactness of the $\dbar$-Neumann operator $N_q$ on  high level $L^2$-integrable $(0,q)$-forms ($q>1$) of a smooth bounded pseudoconvex domain.   Our new Property $(P_q^\#)$ and Property $(\widetilde{P}_{q}^\#)$ are as follows.

\begin{defn*}
Given a smooth bounded pseudoconvex domain $\Omega\subset\mathbb{C}^n$ ($n>2$), let $s$ be a fixed integer ($1\leq s \leq n-2$) with $q_0$ defined in Lemma \ref{lemma_pq_2} of section \ref{v2-sec3}.

(1) $b\Omega$ has Property $(P_{q}^\#)$ for $q\geq q_0$ if there exists a finite cover  $\lbrace V_j\rbrace_{j=1}^N$ of $b\Omega$ with special boundary charts and the following holds on each $V_j$:  for any $M>0$, there exist a neighborhood $U$ of $b\Omega$, a $C^2$ smooth function $\lambda$ on $U\cap V_j$ and an ordered index set $I_s=\lbrace j_k,1\leq k\leq s\rbrace\subset\lbrace 1,\cdots,n-1\rbrace$, such that (i) $0\leq\lambda(z)\leq 1$ and (ii) \[ \sumprime_{|K|=q-1}\sum_{j,k}\lambda_{jk}(z)w_{jK}\overline{w_{kK}}-\sumprime_{|J|=q}\sum_{j \in I_s}\lambda_{jj}(z)|w_{J}|^2 \geq M|w|^2\] for any  $z\in U\cap V_j$ and any $(0,q)$-form $w$. 

(2) $b\Omega$ has Property $(\widetilde{P}_{q}^\#)$ for $q\geq q_0$ if there exists a finite cover $\lbrace V_j\rbrace_{j=1}^N$ of $b\Omega$ with special boundary charts  and the following holds on each $V_j$:  for any $M>0$, there exist a neighborhood $U$ of $b\Omega$ , a $C^2$ smooth function $\lambda$ on $U\cap V_j$ and an ordered index set $I_s=\lbrace j_k,1\leq k\leq s\rbrace\subset\lbrace 1,\cdots,n-1\rbrace$, such that:
\begin{enumerate}[(i)]
\item $\displaystyle\sumprime_{|J|=q}\sum_{j\leq n}|L_j\lambda(z)w_{J}|^2$ 

$\leq \tau \displaystyle\Bigl(\sumprime_{|K|=q-1}\sum_{j,k}\lambda_{jk}(z)w_{jK}\overline{w_{kK}}-\sumprime_{|J|=q}\sum_{j\in I_s}\lambda_{jj}(z)|w_{J}|^2\Bigr)$,
\item $\displaystyle\sumprime_{|K|=q-1}\sum_{j,k}\lambda_{jk}(z)w_{jK}\overline{w_{kK}}-\sumprime_{|J|=q}\sum_{j\in I_s}\lambda_{jj}(z)|w_{J}|^2 \geq M|w|^2$,
\end{enumerate} 
for any $z\in U\cap V_j$ and any $(0,q)$-form $w$. Here the constant $\tau>0$ is independent of $M$.
\end{defn*}

One can review Catlin's Property $\pq$ and McNeal's Property $\pqt$ in the Definition \ref{defn_pq} of section \ref{section_fact}. The main consequence of satisfying Property $(P_q^\#)$ and Property $(\widetilde{P}_{q}^\#)$ is that both properties imply compactness of $N_q$.
\begin{thm*}
Let $\Omega\subset\mathbb{C}^n$ $(n>2)$ be a smooth bounded pseudoconvex domain.  For a fixed $s$ $(1\leq s \leq n-2)$ and the associated $q_0$, if $b\Omega$ has Property $(P_{q}^\#)$ or Property $(\widetilde{P}_{q}^\#)$ for $q\geq q_0$, then the $\dbar$-Neumann operator $N_{q}$ is compact on $L^2_{(0,q)}(\Omega)$.
\end{thm*}

It is clear that on the level of individual functions, Property $(P_q^\#)$ and Property $(\widetilde{P}_{q}^\#)$ are involved with subtractions of certain diagonal entries in the complex Hessian.  We point out the difference between the variant properties in this article and Khanh-Zampieri's $(q-P)$ property for compactness of $N_q$ on $q$-pseudoconvex domains in \cite{kh2}.  In \cite{kh2}, the level of $L^2$-forms are fixed first, after which a maximal possible $s$ (the $q_0$ in \cite{kh2}) is obtained in the definition of $q$-pseudoconvex domains and the $(q-P)$ property.  We reverse the treatment by first fixing $s$ then considering the minimal possible level $q_0$ of forms on which compactness estimate holds.  It is essentially this part of idea that leads to the variants of Catlin's Property $(P_q)$ and McNeal's Property $(\widetilde{P}_q)$. 

It is natural to ask how Property $(P_q^\#)$ and Property $(\widetilde{P}_{q}^\#)$ compare to Property $\pq$ and Property $\pqt$. We give some comments about the difference on the Hessian part by an example as follows.  Suppose $n=4, s=1, q_0=q=2$ and Lemma \ref{lemma_pq_2} holds.  If the boundary satisfies Property $(P_2)$, then for any $M>0$ there exists a $C^2$ smooth function $f$ near the boundary such that the sum of any two eigenvalues of the Hessian $(f_{jk}(z))$ is greater than $M$ at the boundary point $z$.  Let us take one example here.  Assume the Hessian $(f_{jk}(z))$ has diagonal entries $2M$, $2M$, $2M$, $5M$ and eigenvalues $\frac{1}{2}M$, $M$, $\frac{9}{2}M$, $5M$.  Such a Hermitian matrix exists by the converse of Schur majorization theorem (see Theorem 4.3.32 in \cite{horn}).  A direct calculation under diagonalized coordinates at $z$ shows that the requirement (ii) in Property $(P_2^\#)$ fails on the tangential parts of some $(0,4)$-forms $w$:  $\sumprime_{J=(j_1,j_2)}(\lambda_{j_1}+\lambda_{j_2})|w_J|^2-\sumprime_J 2M |w_J|^2=-\frac{M}{2}|w_{12}|^2+3M|w_{13}|^2+\frac{7M}{2}|w_{23}|^2$, which can be negative for any $w$ with sufficiently large $|w_{12}|$ (here each $\lambda_{j_i} $ is the eigenvalue of the Hessian matrix).

The above example shows that on the level of individual functions, Property $\pq$ does not imply Property $(P_q^\#)$ even we apply the condition to the tangential part of any $(0,q)$-form for $1<q<n-1$.  Conversely, on the level of individual functions, Property $(P_q^\#)$ does not imply Property $\pq$ in general.  This is clear by constructing a Hessian matrix with some negative diagonal entries and eigenvalues.  But it is still uncertain about the relation of these two properties on the level of function families.  A similar conclusion can be made about Property $(\widetilde{P}_{q}^\#)$ and Property $\pqt$.

While  both new properties are restrictive to certain levels of $q$ dependent on the behavior of the Levi form, such restrictions disappear in the case of $q=n-1$.
\begin{defn*}
For a smooth bounded pseudoconvex domain $\Omega$ in $\mathbb{C}^n$ ($n>2$), we have:

(1)  $b\Omega$ has Property $(\widetilde{P}^\#_{n-1})$ if there exists a finite cover $\lbrace V_j\rbrace_{j=1}^N$ of $b\Omega$ with special boundary charts defined on each $V_j$ and the following holds on each $V_j$:  for any $M>0$, there exists an open neighborhood $U$  of $b\Omega$ and a $C^2$ smooth function $\lambda$ on $U\cap V_j$, such that $\lambda_{tt}(z)\geq M$ and $\sum_{i=1}^n |L_i\lambda|^2(z)\leq \tau\lambda_{tt}(z)$ hold on $U\cap V_j$ for some $t$ ($1\leq t\leq n-1$), with $\tau>0$ independent of $M$.

(2) $b\Omega$ has Property $(P_{n-1}^\#)$ if there exists a finite cover $\lbrace V_j\rbrace_{j=1}^N$ of $b\Omega$ with special boundary charts  and the following holds on each $V_j$:  for any $M>0$, there exists a neighborhood $U$ of $b\Omega$ and a $C^2$ smooth function $\lambda$ on $U\cap V_j$, such that $0\leq\lambda(z)\leq 1$ and there exists $t$ ($1\leq t\leq n-1$) such that $\lambda_{tt}\geq M$ on $U\cap V_j$.
\end{defn*}

\begin{thm*}
Let $\Omega\subset\mathbb{C}^n$ $(n>2)$ be a smooth bounded pseudoconvex domain.  If $b\Omega$ has Property $(P_{n-1}^\#)$ or Property $(\widetilde{P}^\#_{n-1})$, then the $\dbar$-Neumann operator $N_{n-1}$ is compact on $L^2_{(0,n-1)}(\Omega)$.
\end{thm*}

Property $(P_{n-1}^\#)$ and Property $(\widetilde{P}^\#_{n-1})$ are obtained by applying Property $(P_{q}^\#)$ and Property $(\widetilde{P}^\#_{q})$ on the tangential parts of $(0,n-1)$-forms.  In such a case, the impact of eigenvalues of the Hessian disappears by direct verification (see the proof of Proposition \ref{tildeprop}).  On the level of individual functions, Property $({P}^\#_{n-1})$ and Property $(\widetilde{P}_{n-1}^\#)$ are different from Property $({P}_{n-1})$ and Property $(\widetilde{P}_{n-1})$ in the following sense:
\begin{enumerate}[(1)]
\item In Property $(\widetilde{P}_{n-1})$, the self-boundedness applies on the whole gradient of the function $\lambda$, while in Property $(\widetilde{P}_{n-1}^\#)$, the self-boundedness applies on each component of the gradient of $\lambda$.  In addition, our definition of Property $({P}^\#_{n-1})$ and Property $(\widetilde{P}_{n-1}^\#)$ are not dependent on the eigenvalues of the complex Hessian of $\lambda$, indeed only the diagonal entries in the complex Hessian of $\lambda$ occur in both properties and the index $t$ can be varied on different patches.  

\item It is the difference on the requirement of complex Hessian and diagonal entry that makes our result particularly interesting.  A Hessian matrix of some function $\eta$ such as a diagonal $3\times 3$ matrix with diagonal entries $M,0,-M$ in a special boundary chart satisfies $\eta_{11}\geq M$, but it does not satisfy the condition that the sum of any two eigenvalues of the Hessian is at least $M$.  

On the other hand, take a function $\psi$ in $\mathbb{C}^3$ that the sum of any two eigenvalues of its complex Hessian is at least $M$. By the well-known Schur majorization theorem in linear algebra (see Theorem 4.3.26 in \cite{horn}), the sum of any two diagonal entries of the Hessian is at least $M$ in any special boundary chart.  Hence at one fixed point $P$, there must exist one diagonal entry $\psi_{tt}\geq\frac{M}{2}$ in the Hessian at $P$.  A continuity argument gives to a neighborhood of $P$ (dependent on $M$) that the  inequality $\psi_{tt}\geq\frac{M}{4}$ holds.  Note that $M$ can be made arbitrarily big, then the coefficient $\frac{1}{4}$ is immaterial.

The above two examples show that on the level of individual functions, the requirement of $\lambda_{tt}\geq M$ in the Property $(\widetilde{P}^\#_{n-1})$ or Property $({P}_{n-1}^\#)$ does not imply the sum of any two eigenvalues of the Hessian is greater than $M$.  In addition, at one fixed point $P$, the requirement that the sum of any two eigenvalues of the Hessian is greater than $M$ implies the existence of a certain $\lambda_{tt}\geq \frac{M}{4}$ at same point $P$ in any special boundary chart.  

Once we apply the above argument on the boundary which is a compact set, Property $({P}_{n-1}^\#)$ is a weaker sufficient condition for  compactness of the $\dbar$-Neumann operator on $(0,n-1)$-forms than Property $({P}_{n-1})$, on the level of individual functions.  Proposition \ref{v2_prop_equi_sec5} in section \ref{v2_sec5} contains more details.  To see if this fact can be extended to the sense of function families, we need to study  potential theoretic characterization of these variant properties, which has not been done in this paper.

\end{enumerate}

Neither Property $(P_q^\#)$ nor Property $(\widetilde{P}_{q}^\#)$  is preserved under biholomorphisms in general because the diagonal entries and the sum of them are not (known to be) preserved under biholomorphisms.  However, the dependence on the diagonal entries makes the new conditions easier to be checked under certain circumstances.  This is in particularly important in the applications of this paper.

\subsection{Applications}
On general smooth pseudoconvex domains, verifying Property $\pq$ is the only known way to prove compactness of $N_q$ with specific examples. It is still unknown if there exists any example that satisfies Property  $\pqt$ but not Property $\pq$. Straube (\cite{stra2}) introduced geometric conditions that imply compactness of $N_1$ (see \cite{mun-stra} for the case of $q>1$).  However, whether their results can be applied to domains with compact $\dbar$-Neumann operator, but without Property  $\pq$, remains unknown.  

As an application of our results,  we discuss the relation between small set of infinite-type points on the boundary of any pseudoconvex domain and  compactness of the $\dbar$-Neumann operator $N_{n-1}$.  We show that if the Hausdorff $(2n-2)$-dimensional measure of the weakly pseudoconvex  points on the boundary of a smooth bounded pseudoconvex domain is zero, then the $\dbar$-Neumann operator $N_{n-1}$ is compact on $(0,n-1)$-level $L^2$-integrable forms.  This result generalizes a classical result of Sibony (\cite{sibony}) and Boas (\cite{boas3}) to the case of $(0,n-1)$-level forms.

The innovation in our example is that while directly applying Property  $\pq$ appears not to work in the argument, applying Property $(P_{n-1}^\#)$ works on the diagonal entry in the complex Hessian and hence compactness of $N_{n-1}$ holds.  Therefore our example is not obtained from directly applying Property $\pq$ or Property $\pqt$.

\subsection{The unified estimate}
The last innovation in this paper is a unified estimate of the twisted Kohn-Morrey-H\"{o}rmander estimate (see in \cite{mcneal2} or section 2.6 in \cite{stra1}) and the $q$-pseudoconvex Ahn-Zampieri estimate (see section 1.9 in \cite{zam} or \cite{ahn}) on a smooth bounded domain.  This unified estimate directly results in the formulation of Property $(P_q^\#)$ and Property $(\widetilde{P}_{q}^\#)$, both of which imply compactness of $N_q$. Because of the differentiation applying to both of the twisted factor and the weight function, error terms in the unified estimate are treated differently.  This unified estimate is formulated in \thmref{basic_es} and has a generalized version in \corref{cor_of_unified_es}. 

The basic estimate which we derive for  compactness of $N_q$ is based on the recent work of Ahn (\cite{ahn}) and Zampieri (\cite{zam}).  Although their work focuses on  generalizations of the $\dbar$-Neumann theory from classical bounded pseudoconvex domains to $q$-pseudoconvex domains mentioned above, the idea in their work is valid in our case but applied in a different way (see the above discussion before the definition of Property $(\widetilde{P}^\#_{n-1})$ and Property $(P^\#_{n-1})$).  

Finally we point out that on the level of function families, the relation between our variants  and Catlin's Property $\pq$ is not fully understood, nor is the difference between these variants and McNeal's Property $(\widetilde{P_q})$ known here.  It is expected that potential theoretic characterization of these variant properties in analogous to Sibony's work (\cite{sibony}) can further clarify the difference from Property $\pq$ and Property $(\widetilde{P_q})$.   The author plans to study these questions in a separate article.

The paper is organized as follows: in section \ref{section_fact}, we list some facts and background materials about the $\dbar$-Neumann problem;  in section \ref{v2-sec3}, we prove the unified estimate and some technical propositions;  in section  \ref{v2_sec4}, we define Property $(\widetilde{P}_{n-1}^\#)$ and give the proof that this variant condition implies compactness of $N_{n-1}$;  in section \ref{v2_sec5}, we define Property $(P_{n-1}^\#)$ and give the proof that this variant condition also implies compactness of $N_{n-1}$; in section \ref{v2_sec6}, we give the application of Property $(P_{n-1}^\#)$ and discuss the relation between small set of infinite-type points on the boundary and  compactness of the $\dbar$-Neumann operator $N_{n-1}$; in section \ref{v2_sec7}, we formulate the general definition of Property $(P_q^\#)$ and Property $(\widetilde{P}_{q}^\#)$ and prove these conditions imply  compactness of $N_{q}$.

\noindent\textbf{Acknowledgment.}  A part of this research was supported by Zhejiang Provincial Natural Science Foundation of China under Grant LQ21A010008.  The author wishes to thank Emil Straube and Harold Boas for their reading on a draft version of this manuscript.  The author also wishes to thank Andrew Raich, Phillip Harrington and Song-Ying Li for their discussions and suggestions in the author's research during his post-doc in University of Arkansas and University of California, Irvine.  Last but not least,  the author appreciates the anonymous reviewer for careful reading and thoughtful suggestions that improve this paper substantially.

\section{Preliminaries}
\label{section_fact}
Let $\Omega$ be a bounded and $C^\infty$ smooth domain in $\mathbb{C}^n$ ($n\geq 2$).  Let $L^2_{(0,q)}(\Omega)$ be the space of $(0,q)$-forms ($1 \leq q \leq n$) with $L^2$-integrable coefficients.  Let $(z_1,z_2,\cdots,z_n)$ be the standard complex coordinates in $\mathbb{C}^n$.  Given any $(0,q)$-form $u=\sum_{J}'{u_J d\overline{z}_J}$, the unweighted $L^2$-norm is defined as $\Vert\sum_{J}'{u_J d\bar{z}_J}\Vert^2=\sum_{J}'\int_{\Omega}{\vert u_J\vert^2 dV(z)}$, where the summation is over an increasing multi-index $J=(j_1,\ldots,j_q)$.  $L^2_{(0,q)}(\Omega)$ is a Hilbert space with the above norm and associated inner product $(\cdot,\cdot)$.  The weighted $L^2$-norm is denoted by $\Vert\sum_{J}'{u_J d\bar{z}_J}\Vert_{\varphi}^2=\sum_{J}'\int_{\Omega}{\vert u_J\vert^2e^{-\varphi} dV(z)}$, where $\varphi\in C^1(\bar{\Omega})$.  This makes $L^2_{(0,q)}(\Omega)$ a Hilbert space with  the above weighted norm and associated weighted inner product $(\cdot,\cdot)_\varphi$.  Hence given any $(0,q)$-forms $u,v$, the weighted inner product is \[(u,v)_\varphi=(\sumprime_{J} u_Jd\bar{z}_J,\sumprime_{J} v_Jd\bar{z}_J)_\varphi=\sumprime_{J}\int_\Omega u_J\overline{v_J}e^{-\varphi}dV.\]  We suppress $d\overline{z_J}$ in the inner product whenever the context is clear.

Set $\displaystyle\dbar(\sumprime_{J}{u_J d\overline{z}_J})=\sum_{j=1}^n\sumprime_J\frac{\partial u_J}{\partial \overline{z}_j}d\overline{z}_j\wedge d\overline{z}_J$, where the derivatives are viewed as distributions.  We denote the domain of $\dbar$ by $\dom(\dbar)=\lbrace u\in L^2_{(0,q)}(\Omega)|\dbar u\in L^2_{(0,q+1)}(\Omega)\rbrace$.  By functional analysis results, $\dbar$ is a linear, closed, densely defined operator on $L^2_{(0,q)}(\Omega)$ and hence has a Hilbert adjoint $\dbarad$.  We denote the domain of $\dbarad$ by $\dom(\dbarad)$ and $\dom(\dbarad)=\lbrace v\in L^2_{(0,q+1)}(\Omega)|\exists C>0,~|(v,\dbar u)|\leq C ||u||,\forall u\in \dom(\dbar) \rbrace$.  When $\Omega$ has a $C^2$ smooth boundary with the defining function $r$, by using integration by parts, we know  that given any $u\in C^1_{(0,q+1)}(\overline{\Omega})$, $u\in \dom(\dbarad)$ if and only if $\sum_{j=1}^n u_{jK}\frac{\partial r}{\partial z_j}=0$ on $b\Omega$ for all multi-indices $K$ of length $q$ (see for example in \cite{stra1}, section 2.1).  

The $\dbar$-complex can be set up in the weighted $L^2$-spaces as well.  We denote the resulting adjoint by $\dbaradp$ and its domain by $\dom(\dbaradp)$.  It is well known that $\dom(\dbaradp)=\dom(\dbarad)$ if $\varphi\in C^1(\bar{\Omega})$.  If $u\in L^2_{0,q}(\Omega)$, we denote the formal adjoint of $\dbar$ by $\vartheta_\varphi$ such that $(u,\dbar v)_\varphi=(\vartheta_\varphi u,v)_\varphi$ for every $C^\infty$ smooth compactly supported form $v$ on $\Omega$. $\dbaradp u=\vartheta_\varphi u$ if $u\in\dom(\dbaradp)$.

On a smooth pseudoconvex domain, we introduce a special boundary chart induced by the local complex tangents near the boundary.   Our notations here can also be found in \cite{ahn}, \cite{fbkohn} or \cite{zam}.

Near a boundary point $P$ of $\Omega$, we choose vector fields $L_1,\cdots,L_{n-1}$ of type $(1,0)$  which are orthonormal and span $T_z^\mathbb{C}(b\Omega_\epsilon)$ for $z$ near $P$, where $\Omega_\epsilon=\lbrace z\in\Omega|r(z)<-\epsilon\rbrace$.  $L_n$ is defined to be the complex normal and its length on the boundary is normalized to $1$.  Note that $\lbrace L_j\rbrace_{j=1}^n$ locally forms an orthonormal coordinate system near the boundary point $P$.

Define $(1,0)$-forms $\lbrace\omega_j\rbrace_{j=1}^n$ such that they form the dual basis of $\lbrace L_j\rbrace_{j=1}^n$ near $P$.  For a $C^1$ smooth function $f$, $\dbar f$ in the basis $\lbrace\bar{\omega}_j\rbrace_{j=1}^n$ has the expression: $\dbar f=\sum_{j=1}^n (\bar{L}_jf)\bar{\omega}_j$.  By taking wedge products of $\bar{\omega}_j$'s, we have a local orthonormal basis for $(0,q)$-forms ($q\geq 1$) near $P$.  We say $\lbrace\omega_j\rbrace_{j=1}^n$, $\lbrace L_j\rbrace_{j=1}^n$ and their induced coordinates form a special boundary chart near $P$.  Now let $c_{jk}^i$ defined by:  
$\dbar\omega_i=\sum_{j,k}^n c_{jk}^i\bar{\omega}_j\wedge\omega_k$.
Then for a $C^2$ smooth function $f$ we have
\[\partial\dbar f=\sum_{j,k}\left(L_j\bar{L}_k f+\sum_i\bar{c}_{jk}^i\bar{L}_i f\right)\omega_j\wedge\bar{\omega}_k.\]
Denote $f_{jk}=L_j\bar{L}_k f+\sum_i\bar{c}_{jk}^i\bar{L}_i f$, we have 
\[\partial\dbar f=\sum_{j,k} f_{jk}\omega_j\wedge\bar{\omega}_k.\]

In general, if $u\in L^2_{(0,q)}(\Omega)$ and $u=\sum_{|J|=q}' u_J\bar{\omega}_J$, a direct calculation by using definition shows that
\begin{equation}\label{eqn1}
\dbar u=\sumprime_{|J|=q}\sum_{i=1}^n\left( \bar{L}_i u_{J}\right) \bar{\omega}_i\wedge\bar{\omega}_J+\cdots,
\end{equation}
\begin{equation}\label{eqn2}
\vartheta_\varphi u=-\sumprime_{|K|=q-1}\sum_{j\leq n}\delta_{\omega_j}(u_{jK})\bar{\omega}_K+\cdots,
\end{equation}
where $\delta_{\omega_j} f=e^\varphi L_j(e^{-\varphi}f)$ for $L^2$-integrable functions $f$.  The dots in the above two equations are the terms that are only involved with the coefficients of $u$ and the differentiation of the coefficients of $L_j$ or $\bar{\omega}_K$.  In particular, $u$ is not differentiated and $\varphi$ does not occur in the above terms of dots.

The following integration by parts formula can be found in section 5.3, \cite{chenshaw}:
\begin{prop}\label{prop_intbyparts}
Let $\Omega$ be a bounded smooth domain in $\mathbb{C}^n$ with the defining function $r$ and $U$ is an open neighbourhood of any boundary point $P$.  Let $C_0^1(\overline{\Omega}\cap U)$ be the space of functions in $C^1(\overline{\Omega})$ which are supported in $\overline{\Omega}\cap U$.  For any $u,v\in C_0^1(\overline{\Omega}\cap U)$,
\begin{equation}\label{eqn_intbyparts}
(u,\delta_{\omega_j} v)_\varphi=-(\bar{L}_j u,v)_\varphi+(\sigma_j u,v)_\varphi+\int_{b\Omega} u\bar{v}(\bar{L}_j r)e^{-\varphi}dS,
\end{equation}
where $\sigma_j\in C^1(\overline{\Omega}\cap U)$ is independent of $u,v$.
\end{prop}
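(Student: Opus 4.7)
The plan is to unfold the weighted inner product and reduce the identity to the classical divergence theorem applied to the $C^1$ complex vector field $\bar L_j$, keeping careful track of the weight $e^{-\varphi}$. Since $\varphi$ is real, I would first expand
\[
\overline{\delta_{\omega_j} v} \;=\; \overline{e^{\varphi} L_j(e^{-\varphi} v)} \;=\; \bar L_j \bar v \;-\; (\bar L_j \varphi)\,\bar v,
\]
so that the left-hand side of \eqref{eqn_intbyparts} splits as
\[
(u,\delta_{\omega_j} v)_\varphi \;=\; \int_\Omega u\,(\bar L_j \bar v)\,e^{-\varphi}\,dV \;-\; \int_\Omega u\bar v\,(\bar L_j \varphi)\,e^{-\varphi}\,dV.
\]

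Next, I would handle the first integral on the right by integration by parts. Applying the Leibniz rule
\[
\bar L_j(u\bar v\,e^{-\varphi}) \;=\; (\bar L_j u)\bar v\,e^{-\varphi} \;+\; u(\bar L_j \bar v)\,e^{-\varphi} \;-\; u\bar v\,(\bar L_j \varphi)\,e^{-\varphi}
\]
together with the ordinary divergence theorem for the real vector field underlying $\bar L_j$ --- justified because $\bar L_j\in C^1(\bar\Omega\cap U)$, $\Omega$ has $C^2$ boundary, and $u,v$ are supported in $\bar\Omega\cap U$ --- yields
\[
\int_\Omega \bar L_j(u\bar v\,e^{-\varphi})\,dV \;=\; \int_{b\Omega} u\bar v(\bar L_j\rho)\,e^{-\varphi}\,dS \;-\; \int_\Omega u\bar v\,e^{-\varphi}\,\mathrm{div}(\bar L_j)\,dV,
\]
where $\mathrm{div}(\bar L_j)\in C^1(\bar\Omega\cap U)$ is determined solely by the coefficients of $L_j$.

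Combining the two displays above and solving for $\int_\Omega u(\bar L_j\bar v)\,e^{-\varphi}\,dV$, the two $(\bar L_j\varphi)$ contributions cancel exactly, producing
\[
(u,\delta_{\omega_j} v)_\varphi \;=\; -(\bar L_j u,v)_\varphi \;-\; (\mathrm{div}(\bar L_j)\,u,v)_\varphi \;+\; \int_{b\Omega} u\bar v(\bar L_j\rho)\,e^{-\varphi}\,dS.
\]
Setting $\sigma_j := -\mathrm{div}(\bar L_j)$, which is a $C^1$ function on $\bar\Omega\cap U$ manifestly independent of $u$, $v$, and of the weight $\varphi$, delivers \eqref{eqn_intbyparts}. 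The whole argument is essentially bookkeeping; the one conceptually important point --- and the only step that requires any attention --- is the cancellation of the $\bar L_j\varphi$ terms between the expansion of $\overline{\delta_{\omega_j}v}$ and the Leibniz expansion of $\bar L_j(u\bar v e^{-\varphi})$. It is precisely this cancellation that forces $\sigma_j$ to be weight-independent and allows the identity to take the clean form stated in the proposition.
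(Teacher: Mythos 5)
Your argument is correct and is essentially the standard derivation one would find in the cited reference (Chen--Shaw, \S5.3): expand $\overline{\delta_{\omega_j}v}=\bar L_j\bar v-(\bar L_j\varphi)\bar v$, apply the Leibniz rule to $\bar L_j(u\bar v e^{-\varphi})$, invoke the divergence theorem for the $C^1$ vector field $\bar L_j$, and observe that the two $\bar L_j\varphi$ contributions cancel, leaving $\sigma_j=-\operatorname{div}(\bar L_j)$. You correctly identify and emphasize the one substantive point --- the cancellation that makes $\sigma_j$ independent not only of $u,v$ but also of the weight $\varphi$ --- which is exactly what is needed for the later error-term estimates in the paper.
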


We also have the following observation regarding the product rule of $\delta_{\omega_i}$ (implicitly used in \cite{mcneal2}): for any $C^1$ smooth functions $g$ and  $v$,
\begin{eqnarray}\label{eqn3_prod}
\delta_{\omega_i}(gv) & = & L_i(gv)-(L_i\varphi)\cdot gv \nonumber \\
& = & (L_i g)v+g(L_i v-(L_i \varphi) v) \nonumber \\
& = & (L_i g)v+g\delta_{\omega_i}v.
\end{eqnarray}

In a special boundary chart near any boundary point $P$ of a $C^2$ smooth domain $\Omega$, we have a simple expression for $\dom(\dbarad)$: given any $u\in C^1_{(0,q)}(\overline{\Omega})$ and $u$ is supported in a special boundary chart, $u\in \dom(\dbarad)$ if and only if $u_J=0$ on $b\Omega$ when $n\in J$.  Set $u=\sum_J'u_J\overline{\omega}_J$ in a special boundary chart, the tangential part of $u$ is defined as $u_{\textrm{Tan}}=\sum_{n\notin J}'u_J\overline{\omega}_J$ and the normal part of $u$ is defined as $u_{\textrm{Norm}}=\sum_{n\in J}'u_J\overline{\omega}_J$.

We define the complex Laplacian as $\square_q u:=\dbarad\dbar u+\dbar\dbarad u$ on $L^2_{(0,q)}(\Omega)$.  Here we suppress the subscript of the level of the form in $\dbar$ and $\dbarad$ for simplicity.  We call the (bounded) inverse operator of $\square_q$  the $\dbar$-Neumann operator, and denote it by $N_q$.  Based on H{\"o}rmander's work (\cite{hor1,hor2}), $\square_q$ has a bounded inverse $N_q$ on $L^2_{(0,q)}(\Omega)$ for bounded pseudoconvex domains.

$N_q$ is said to be compact on $L^2_{(0,q)}(\Omega)$ if the image of the unit ball in $L^2_{(0,q)}(\Omega)$ under $N_q$ is relatively compact in $L^2_{(0,q)}(\Omega)$.  The following lemma in functional analysis is the foundation of compactness estimate of $N_q$.
\begin{lem}[Theorem 16.4, \cite{lion}]\label{lionlemma}
Assume $X$ and $Y$ are Hilbert spaces, $T:X\rightarrow Y$ is a linear operator.  If for any $\epsilon>0$ there are a Hilbert space $Z_\epsilon$, a linear compact operator $S_\epsilon:X\rightarrow Z_\epsilon$, and a constant $C_\epsilon$ such that \[ ||Tx||_Y \leq \epsilon ||x||_X+C_\epsilon ||S_\epsilon x||_{Z_\epsilon},\]then $T$ is compact.
\end{lem}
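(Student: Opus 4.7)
The plan is to verify compactness of $T$ by showing that every bounded sequence in $X$ has a subsequence whose image under $T$ is Cauchy in $Y$. Fix a bounded sequence $\{x_n\}\subset X$ with $\|x_n\|_X\le M$. Apply the hypothesis with $\epsilon=1/k$ for each integer $k\ge 1$ to obtain compact operators $S_{1/k}:X\to Z_{1/k}$ and constants $C_{1/k}$. Since each $S_{1/k}$ is compact, the image sequence $\{S_{1/k}x_n\}$ is relatively compact in $Z_{1/k}$ for each fixed $k$.

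Next I would run a standard diagonal extraction. Starting from $\{x_n\}$, choose a subsequence $\{x_n^{(1)}\}$ such that $\{S_{1}x_n^{(1)}\}$ converges in $Z_1$; from this, choose $\{x_n^{(2)}\}$ so that $\{S_{1/2}x_n^{(2)}\}$ converges in $Z_{1/2}$; and so on. The diagonal sequence $y_n:=x_n^{(n)}$ then has the property that $\{S_{1/k}y_n\}_{n}$ converges in $Z_{1/k}$ for every $k$, in particular it is Cauchy.

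Now apply the hypothesis to $y_i-y_j$. For any fixed $k$,
\[
\|Ty_i-Ty_j\|_Y\ \le\ \frac{1}{k}\|y_i-y_j\|_X+C_{1/k}\,\|S_{1/k}(y_i-y_j)\|_{Z_{1/k}}\ \le\ \frac{2M}{k}+C_{1/k}\,\|S_{1/k}(y_i-y_j)\|_{Z_{1/k}}.
\]
Letting $i,j\to\infty$ with $k$ fixed, the second term tends to $0$ because $\{S_{1/k}y_n\}$ is Cauchy, giving $\limsup_{i,j\to\infty}\|Ty_i-Ty_j\|_Y\le 2M/k$. Since $k$ is arbitrary, $\{Ty_n\}$ is Cauchy in $Y$ and hence convergent by completeness. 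Therefore $T$ maps bounded sets to relatively compact sets, proving compactness.

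The argument is entirely routine, and I anticipate no substantive obstacle: the only technical point worth being careful about is that the compact operators $S_\epsilon$ take values in potentially different spaces $Z_\epsilon$, so one cannot pass to a single subsequence that works for all $\epsilon$ simultaneously without the diagonal procedure. Once that is handled, the estimate closes because the $\epsilon\|x\|_X$ term can be made arbitrarily small uniformly in the bounded sequence while the $C_\epsilon\|S_\epsilon x\|_{Z_\epsilon}$ term is controlled along the chosen subsequence for each fixed $\epsilon$.
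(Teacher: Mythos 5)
Your argument is correct: the diagonal extraction over $\epsilon=1/k$ followed by the triangle-inequality estimate on $\|Ty_i-Ty_j\|_Y$ is a clean and complete proof that $T$ maps bounded sequences to sequences with convergent subsequences. Note that the paper does not prove this lemma itself but cites it as Theorem~16.4 of Lions--Magenes \cite{lion}; your proof is the standard one for statements of this type, and the care you take with the fact that the target spaces $Z_\epsilon$ may differ (forcing the diagonal procedure rather than a single subsequence extraction) is exactly the right point to flag.
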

With a more quantified viewpoint, on a bounded pseudoconvex domain $\Omega$, we can characterize  compactness of $N_q$ by the following well known fact (see \cite{mcneal2} or \cite{stra1}, Proposition 4.2):

\begin{prop}\label{new_section1_1}
Let $\Omega$ be a bounded pseudoconvex domain in $\mathbb{C}^n$, $1\leq q\leq n$.  Then the following are equivalent:
\begin{enumerate}[{\normalfont(i)}]
\item $N_q$ is compact as an operator on $L^2_{(0,q)}(\Omega)$.
\item For every $\epsilon>0$, there exists a constant $C_\epsilon$ such that we have the compactness estimate:
\[||u||^2\leq\epsilon(||\dbar u||^2+||\dbarad u||^2)+C_\epsilon||u||^2_{-1}~\textrm{for}~u\in \dom(\dbar)\cap \dom(\dbarad).\]
\item The canonical solution operators $\dbarad N_q:L^2_{(0,q)}(\Omega)\cap\ker(\dbar)\rightarrow L^2_{(0,q-1)}(\Omega)$ and $\dbarad N_{q+1}:L^2_{(0,q+1)}(\Omega)\cap\ker(\dbar)\rightarrow L^2_{(0,q)}(\Omega)$ are compact.
\end{enumerate}
\end{prop}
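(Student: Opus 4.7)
The plan is to establish the three equivalences using Lemma \ref{lionlemma}, the Rellich compact embedding $L^2(\Omega) \hookrightarrow H^{-1}(\Omega)$, and standard $L^2$ theory of the $\dbar$-Neumann problem on bounded pseudoconvex domains---in particular the basic estimate $\|u\|^2 \lesssim Q(u,u)$ on $\dom(\dbar) \cap \dom(\dbarad)$, where $Q(u,u) = \|\dbar u\|^2 + \|\dbarad u\|^2$, together with the positivity and self-adjointness of $N_q$.

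For (ii) $\Rightarrow$ (i), I would regard $\mathcal{H} := \dom(\dbar)\cap\dom(\dbarad)$, equipped with the graph norm $\bigl(\|\cdot\|^2 + Q(\cdot,\cdot)\bigr)^{1/2}$, as a Hilbert space. The identity $Q(N_q f, N_q f) = (f, N_q f) \leq \|f\|\|N_q f\|$ shows that $N_q$ maps $L^2_{(0,q)}(\Omega)$ boundedly into $\mathcal{H}$. Applying Lemma \ref{lionlemma} to the inclusion $\mathcal{H} \hookrightarrow L^2_{(0,q)}(\Omega)$ with the auxiliary compact operator chosen to be the Rellich inclusion into $H^{-1}$, the estimate in (ii) is precisely the hypothesis needed, so the inclusion is compact; composing with the bounded $N_q \colon L^2 \to \mathcal{H}$ yields compactness of $N_q$ on $L^2_{(0,q)}(\Omega)$.

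For (i) $\Rightarrow$ (ii), I would invoke the spectral decomposition of the compact, positive, self-adjoint operator $N_q$: an orthonormal eigenbasis $\{\phi_k\}$ of $L^2_{(0,q)}(\Omega)$ with $N_q \phi_k = \lambda_k \phi_k$, $\lambda_k \to 0^+$, and corresponding eigenvalues $\mu_k = \lambda_k^{-1} \to \infty$ for $\square_q$. Expanding $u = \sum c_k \phi_k$ for $u \in \dom(\dbar) \cap \dom(\dbarad)$, one has $Q(u,u) = \sum \mu_k |c_k|^2$ (extended from $\dom(\square_q)$ by closure of the quadratic form); choosing $K$ so that $\mu_k > 1/\epsilon$ for $k > K$ gives $\sum_{k>K} |c_k|^2 \leq \epsilon Q(u,u)$, and the finite tail $\sum_{k \leq K} |(u,\phi_k)|^2$ is bounded by $C_\epsilon \|u\|_{-1}^2$ once one knows the eigenforms are smooth enough to pair continuously against the $H^{-1}$ norm (interior smoothness is automatic by ellipticity, and only a fixed finite-dimensional projection is involved).

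The equivalence (ii) $\Leftrightarrow$ (iii) comes from the Hodge decomposition $u = \dbar\dbarad N_q u + \dbarad \dbar N_q u$ together with the commutation $\dbar N_q = N_{q+1}\dbar$ on $\dom(\dbar)$. Given (ii), applying the compactness estimate to $u = \dbarad N_{q+1} g$ for $g \in \ker(\dbar) \cap L^2_{(0,q+1)}(\Omega)$ collapses (since $\dbar u = g$ and $\dbarad u = 0$) to $\|u\|^2 \leq \epsilon \|g\|^2 + C_\epsilon \|u\|_{-1}^2$, whence Lemma \ref{lionlemma} yields compactness of $\dbarad N_{q+1}$; the same argument one level lower handles $\dbarad N_q$. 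Conversely, each Hodge piece of $u$ is the output of one of these canonical solution operators applied to $\dbar u$ or $\dbarad u$, so their compactness combined with the standard approximation of compact operators by finite-rank ones produces the compactness estimate (ii). The main obstacle is the (i) $\Rightarrow$ (ii) step, where one must rigorously justify the spectral representation of $Q$ on the full form domain rather than just on $\dom(\square_q)$ and secure enough boundary regularity of the eigenforms to make the $H^{-1}$-pairings meaningful; the remaining implications are essentially mechanical once Lemma \ref{lionlemma} and the Hodge framework are in hand.
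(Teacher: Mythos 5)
Your overall outline is sensible, and the (ii) $\Rightarrow$ (i) step is correct: the identity $Q(N_qf, N_qf) = (f, N_qf)$ together with H\"ormander's bound $\|N_qf\|\lesssim\|f\|$ shows that $N_q$ maps $L^2_{(0,q)}$ boundedly into $\dom(\dbar)\cap\dom(\dbarad)$ with its graph norm, and the compactness estimate plus the Rellich embedding plus Lemma~\ref{lionlemma} makes the inclusion of that graph-norm space back into $L^2$ compact.

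The (i) $\Rightarrow$ (ii) step, however, contains a genuine gap that you flag but cannot treat as a technicality. To control the finite piece $\sum_{k\leq K}|(u,\phi_k)|^2$ by $\|u\|_{-1}^2$ you need the eigenforms $\phi_k$ of $\square_q$ to be in $W^1(\Omega)$, so that the $L^2$ pairing extends continuously to $W^{-1}\times W^1$. Interior ellipticity only gives smoothness in the interior; near $b\Omega$ the $\dbar$-Neumann problem is not elliptic and forms in $\dom(\square_q)$ need not have any Sobolev regularity up to the boundary. The natural tool, the Kohn--Nirenberg regularity theorem, is itself driven by the compactness estimate (ii) you are trying to prove, so appealing to it here is circular. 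The standard argument (\cite{stra1}, Proposition~4.2; \cite{mcneal2}) avoids the eigenbasis entirely. It first proves (i) $\Leftrightarrow$ (iii) from the operator identity $N_q = (\dbarad N_q)^*(\dbarad N_q) + (\dbarad N_{q+1})(\dbarad N_{q+1})^*$, which follows from $N_q = N_q\square_q N_q$ and the commutation rules $N_q\dbar=\dbar N_{q-1}$, $N_q\dbarad=\dbarad N_{q+1}$, together with the fact that a positive operator dominated by a compact positive operator is compact. Then (iii) $\Rightarrow$ (ii) follows from the Hodge identity $u = (\dbarad N_q)^*\dbarad u + \dbarad N_{q+1}\dbar u$ on $\dom(\dbar)\cap\dom(\dbarad)$ by contradiction: if (ii) failed, one could find $u_n$ with $\|u_n\|=1$, $Q(u_n,u_n)$ bounded, and $\|u_n\|_{-1}\to 0$; compactness of the two canonical solution operators extracts an $L^2$-convergent subsequence $u_n\to u$, while $\|u_n\|_{-1}\to 0$ forces $u=0$, contradicting $\|u\|=1$. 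No boundary regularity of any auxiliary forms is needed, and no finite-rank approximation---your sketch for (iii) $\Rightarrow$ (ii) via finite-rank approximation runs into the same $W^1$ regularity obstruction as the spectral argument. Finally, your (ii) $\Rightarrow$ (iii) step only covers $\dbarad N_{q+1}$: for $h\in\ker(\dbar)\cap L^2_{(0,q)}$ the form $u=\dbarad N_q h$ lives in $L^2_{(0,q-1)}$, where the level-$q$ estimate (ii) does not apply, so ``the same argument one level lower'' is not available. The compactness of $\dbarad N_q$ should instead come from the operator identity, i.e.\ from (i) rather than from (ii) directly.
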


We call the estimate in (ii) of Proposition \ref{new_section1_1} the compactness estimate of $N_q$.  Here $||\cdot||_{-1}$ is the unweighted $L^2$-Sobolev $W^{-1}$-norm defined coefficientwise for any $(0,q)$-form $u$, i.e., a form $u=\sum'_J u_J d\overline{z}_J$ is in $W^{-1}(\Omega)$ if and only if $u_J\in W^{-1}(\Omega)$ for all $J$. In general, we define the Sobolev $W^s$-norm ($s\in\mathbb{R}$) for any $(0,q)$-form $u$ in the same way as above: a form $u=\sum'_J u_J d\overline{z}_J$ is in $W^{s}(\Omega)$ if and only if $u_J\in W^{s}(\Omega)$ for all $J$.

We remark here that if $b\Omega$ is smooth, $C^\infty_{(0,q)}(\bar{\Omega})\cap\dom(\dbarad)$ is dense in $\dom(\dbar)\cap\dom(\dbarad)$ in the graph norm $u\mapsto (||u||^2+||\dbar u||^2+||\dbarad u||^2)^{\frac{1}{2}}$  (see \cite{hor1}).  Then it suffices to establish the compactness estimate for $C^\infty_{(0,q)}(\bar{\Omega})\cap\dom(\dbarad)$ in order to prove compactness of $N_q$.

We give the definition of Property $\pq$ and Property $\pqt$ as follows:

\begin{defn}\label{defn_pq}
\begin{enumerate}[(1)]
\item $b\Omega$  has Property $(P_q)$ ($1\leq q\leq n$) if there exists a finite cover  $\lbrace V_j\rbrace_{j=1}^N$ of $b\Omega$ with special boundary charts and the following holds on each $V_j$: for any $M>0$, there exists an open neighborhood $U$ of $b\Omega$ and a $C^2$ smooth function $\lambda$ on $U\cap V_j$ such that $0\leq\lambda\leq 1$ on $U\cap V_j$ and $\forall z\in U\cap V_j$, the sum of any $q$ eigenvalues of the Hessian matrix $(\lambda_{jk}(z))$ is at least $M$.
\item $b\Omega$ has Property $\pqt$ ($1\leq q\leq n$) if there exists a finite cover  $\lbrace V_j\rbrace_{j=1}^N$ of $b\Omega$ with special boundary charts and the following holds on each $V_j$: there is a constant $C$ such that for any $M>0$, there exists an open neighborhood $U$ of $b\Omega$ and a $C^2$ smooth function $\lambda$ on $U\cap V_j$ such that (i)
$\displaystyle\sumprime_{|K|=q-1}\Big|\sum_{j=1}^nL_j\lambda(z)w_{jK}\Big|^2\leq C\sumprime_{|K|=q-1}\sum_{j,k}\lambda_{jk}(z)w_{jK}\bar{w}_{kK}$ holds for any $z\in U\cap V_j$ and any $(0,q)$-form $w$ at $z$, and (ii) for any $z\in U\cap V_j$, the sum of any $q$ eigenvalues of the Hessian matrix $(\lambda_{jk}(z))$ is at least $M$.
\end{enumerate}
\end{defn}

One can also formulate the definition of Property $(P_q)$ and Property $\pqt$ in the original coordinate system $(z_1,\cdots,z_n)$ and replace $b\Omega$ with any compact subset $K$. 

The following linear algebra result is useful when proving Property $\pq$ and related estimates.  See for example in \cite{cat5} for its application in proving Property $\pq$, here we follow \cite{stra1}.

\begin{lem}[\cite{stra1}]\label{0lemma7}
Let $\lambda$ be a $C^2$ smooth function in $\mathbb{C}^n$. Fix any $z\in\mathbb{C}^n$, $1\leq q \leq n$ and let $u$ be any $(0,q)$-form at $z$.  The following are equivalent:
\begin{enumerate}[{\normalfont(i)}]
\item The sum of any $q$ eigenvalues of the Hessian matrix $(\lambda_{jk}(z))$ is at least $M$.
\item $\sumprime\limits_{|K|=q-1}\sideset{}{}\sum\limits_{j,k=1}^n\lambda_{jk}(z) u_{jK}\overline{u_{kK}}\geq M|u|^2$.
\item $\sum\limits_{s=1}^q\sum\limits_{j,k=1}^n \lambda_{jk}(z)  (\mathbf{e}^s)_j\overline{(\mathbf{e}^s)_k}\geq M$, whenever $\mathbf{e}^1,\mathbf{e}^2,\cdots,\mathbf{e}^q$ are orthonormal vectors in $\mathbb{C}^n$.
\end{enumerate}
\end{lem}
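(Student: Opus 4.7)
The plan is to reduce all three conditions to statements about the eigenvalues of the Hermitian matrix $H=\left(\frac{\partial^2\lambda(z)}{\partial z_j\partial\overline{z}_k}\right)_{j,k}$ by choosing a unitary change of basis that diagonalizes $H$. Since (i) is coordinate-invariant (sums of eigenvalues), and both (ii) and (iii) are invariant under unitary changes of the orthonormal basis used to express $u$ or the vectors $\mathbf{e}^s$, I may assume throughout that $H=\textrm{diag}(\mu_1,\ldots,\mu_n)$ in the working basis.

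\textbf{(i) $\Leftrightarrow$ (iii).} In the diagonalized frame, the quadratic form $H(e,e)=\sum_{j,k}H_{jk}e_j\overline{e_k}$ becomes $\sum_j\mu_j|e_j|^2$. For orthonormal $\mathbf{e}^1,\ldots,\mathbf{e}^q$, the quantity in (iii) is $\sum_{s=1}^q H(\mathbf{e}^s,\mathbf{e}^s)$, which is the trace of $H$ restricted to the $q$-dimensional subspace $V=\textrm{span}(\mathbf{e}^1,\ldots,\mathbf{e}^q)$. By the Courant--Fischer min-max characterization of the partial trace of a Hermitian operator, the infimum of this trace over all $q$-dimensional subspaces $V$ equals the sum of the $q$ smallest eigenvalues of $H$. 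Hence the lower bound $\geq M$ in (iii), valid for every choice of orthonormal $q$-tuple, is equivalent to demanding that the sum of any $q$ eigenvalues of $H$ be $\geq M$, which is (i).

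\textbf{(i) $\Leftrightarrow$ (ii).} Working again in the basis $\{\omega_j\}$ that diagonalizes $H$, I expand $u=\sumprime_{|J|=q}u_J\bar\omega_J$ and use the antisymmetric convention for $u_{jK}$ when $jK$ is not increasing (with $u_{jK}=0$ whenever $j\in K$). Diagonality of $H$ collapses the double sum:
\begin{equation*}
\sumprime_{|K|=q-1}\sum_{j,k=1}^n H_{jk}u_{jK}\overline{u_{kK}}=\sumprime_{|K|=q-1}\sum_{j=1}^n\mu_j|u_{jK}|^2.
\end{equation*}
For each increasing $J$ of length $q$, the term $|u_J|^2$ appears on the right exactly once for each $j\in J$ (taking $K=J\setminus\{j\}$), with coefficient $\mu_j$. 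Collecting terms gives
\begin{equation*}
\sumprime_{|K|=q-1}\sum_{j=1}^n\mu_j|u_{jK}|^2=\sumprime_{|J|=q}\Bigl(\sum_{j\in J}\mu_j\Bigr)|u_J|^2,
\end{equation*}
while $|u|^2=\sumprime_{|J|=q}|u_J|^2$. The inequality in (ii) therefore holds for every $(0,q)$-form $u$ at $z$ if and only if $\sum_{j\in J}\mu_j\geq M$ for every increasing $J$ of length $q$, which, as $J$ ranges over all $q$-subsets of $\{1,\ldots,n\}$, is precisely (i).

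The main technical point to handle carefully is the bookkeeping in the second equivalence: one must be explicit about the antisymmetric extension of $u_{jK}$ outside increasing multi-indices, and verify that each $|u_J|^2$ is counted with the correct multiplicity (exactly $q$ pairs $(j,K)$ contribute, one per $j\in J$). Apart from that, the argument is a direct diagonalization plus min-max, so no further analytic input is required.
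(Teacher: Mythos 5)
The paper states this lemma as a citation to \cite{stra1} and gives no proof of its own, so there is no in-paper argument to compare against. Your proof is correct and is essentially the standard one: unitary diagonalization of the complex Hessian, the Ky Fan/Courant--Fischer variational characterization of the partial trace over $q$-dimensional subspaces for (i)$\Leftrightarrow$(iii), and the combinatorial collapse of $\sumprime_{|K|=q-1}\sum_j\mu_j|u_{jK}|^2$ into $\sumprime_{|J|=q}\bigl(\sum_{j\in J}\mu_j\bigr)|u_J|^2$ for (i)$\Leftrightarrow$(ii), with the multiplicity count of exactly $q$ contributing pairs $(j,K)$ per increasing $J$ handled correctly.
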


The importance of Property $\pq$ and Property $\pqt$ lies in the fact that they imply  compactness of $N_q$:

\begin{thm}[\cite{cat5}, \cite{mcneal2}]
Let $\Omega$ be a smooth bounded pseudoconvex domain in $\mathbb{C}^n$.  Let $1\leq q \leq n$.  If $b\Omega$ satisfies Property $(P_q)$ or Property $\pqt$, then $N_q$ is compact.
\end{thm}

The gap between Property $\pq$ (or Property $\pqt$) of the boundary and compactness of $N_q$ is not clear on general pseudoconvex domains.  Christ and Fu (\cite{christ2}) showed that on a smooth complete pseudoconvex Hartogs domain in $\mathbb{C}^2$, $N_1$ is compact if and only if $b\Omega$ has Property $(P_1)$.  Fu and Straube (\cite{siqi1}) showed that on any smooth convex domains, $N_q$ is compact if and only if $b\Omega$ has Property $(P_q)$.

The part (i) in the definition of Property $\pqt$ essentially says that the gradients of the function family $\lambda_M$ are uniformly bounded in the metric induced by their complex Hessians.  This condition weakens the uniform boundedness requirement in the definition of Property $\pq$ on each individual function level, but on the level of function families it is still not clear if the condition is weaker.  We note that the constant $C$ in the definition of Property $\pqt$ can be rescaled to a arbitrarily small positive number (i.e., set $\lambda$ to be $a\lambda$).  This observation is implicitly used in \cite{mcneal2} and this article.

\section{The unified estimate}\label{v2-sec3}
In this section, we prove the unified estimate on smooth bounded domains.  For the history of standard Kohn-Morrey-H\"{o}rmander estimate and its twisted version, one may check \cite{mcneal2} or section 2.6 in \cite{stra1} and the references there.  For the history of the $q$-pseudoconvex Ahn-Zampieri estimate, one may check section 1.9 in \cite{zam} and the references there.  Since there are twisted factor $g$ and weight function $\varphi$, the error terms involved with the derivatives of $g$ and $\varphi$ must be handled differently. Apart from above, we mainly follow the arguments in \cite{ahn} and \cite{zam} when handling the integration by parts and estimating the commutators of the form $[\delta_{\omega_j},\overline{L_j}]$. A part of the treatment of the twisted factor $g$ and its derivatives in the estimate can also be found in \cite{mcneal2}.

\begin{thm}\label{basic_es}

Let $\Omega$ be a smooth bounded domain with a defining function $r$ in $\mathbb{C}^n$ and let $U$ be an open neighborhood of any boundary point $P$.  Suppose $g$, $\varphi\in C^2(\bar{\Omega})$ and $g>0$, $u=\sum_J' u_J\bar{\omega}_J\in C^\infty_{(0,q)}(\bar{\Omega})\cap\dom(\dbarad)$ with support in $\bar{\Omega}\cap U$.  Let $\gamma>0$ and $0<\epsilon<\frac{1}{2}$ be arbitrary, then we have for every integer $s$ with $1\leq s\leq n-1$, there is a constant $C_{\epsilon ,\gamma}>0$ independent of $u$, $\varphi$ and $g$ such that:
\begin{align}\label{basic_es:1}
&|| \sqrt{g}\dbar u||_\varphi^2  +  (1+\frac{1}{\gamma})||\sqrt{g}\dbarad_\varphi u||_\varphi^2+C_{\epsilon ,\gamma}||\sqrt{g} u||_\varphi^2 \\
 &\geq -(\gamma+\frac{1}{\epsilon})\sumprime_{|J|=q}\sum_{j\leq n}||\frac{1}{\sqrt{g}}(L_jg)u_J||_\varphi^2 \nonumber\\
&\quad+ \epsilon\sumprime_{|J|=q}\Bigl( \sum_{j\geq s+1}||\sqrt{g} \overline{L_j} u_J||_\varphi^2+\sum_{j\leq s}||\sqrt{g}\delta_{\omega_j}u_J||^2_\varphi \Bigr)\nonumber \\
&\quad+ \sumprime_{|J|=q}\sum_{j\leq s}\Bigl(\bigl((g_{jj}-g\varphi_{jj})u_J, u_J\bigr)_\varphi-\int_{b\Omega} gr_{jj} u_J\overline{u_J} e^{-\varphi}dS  \Bigr)   \nonumber\\
&\quad+ \sumprime_{|K|=q-1}\sum_{i,j}\Bigl(\bigl((-g_{ij}+g\varphi_{ij})u_{iK}, u_{jK}\bigr)_\varphi+\int_{b\Omega} gr_{ij} u_{iK}\overline{u_{jK}} e^{-\varphi}dS  \Bigr). \nonumber
\end{align}
\end{thm}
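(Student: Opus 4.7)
The plan is to reprove the Kohn--Morrey--H\"{o}rmander identity in a special boundary chart at $P$, carrying out simultaneously a McNeal-type twist by $\sqrt{g}$ and an Ahn--Zampieri-type splitting of the summation index at $s$.  Expanding $\dbar u$ and $\dbaradp u$ via (\ref{eqn1})--(\ref{eqn2}) and using the orthonormality of $\{\bar\omega_i\wedge\bar\omega_J\}$ yields
\begin{align*}
\|\sqrt{g}\dbar u\|_\varphi^2 &= \sumprime_{|J|=q}\sum_{i}\|\sqrt{g}\,\overline{L_i}u_J\|_\varphi^2 -\sumprime_{|K|=q-1}\sum_{i\neq j}(g\,\overline{L_i}u_{jK},\overline{L_j}u_{iK})_\varphi + E_1,\\
\|\sqrt{g}\dbaradp u\|_\varphi^2 &= \sumprime_{|K|=q-1}\sum_{i,j}(g\,\delta_{\omega_j}u_{jK},\delta_{\omega_i}u_{iK})_\varphi + E_2,
\end{align*}
where the errors $E_1,E_2$ from the ``$\cdots$'' tails of (\ref{eqn1}), (\ref{eqn2}) are of the schematic form $\epsilon\cdot(\text{first-order derivative norm}) + C_\epsilon\|\sqrt{g}u\|_\varphi^2$ after one Cauchy--Schwarz, and will eventually be absorbed into the $(1-\epsilon)$ factor and the final constant $C_{\epsilon,\gamma}\|\sqrt{g}u\|_\varphi^2$.

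The central manipulation is a double application of Proposition \ref{prop_intbyparts} to each off-diagonal ($i\neq j$) difference $(g\,\delta_{\omega_j}u_{jK},\delta_{\omega_i}u_{iK})_\varphi-(g\,\overline{L_i}u_{jK},\overline{L_j}u_{iK})_\varphi$.  Using the commutator $[\delta_{\omega_j},\overline{L_i}]$ (which carries the Hessian $\varphi_{ij}$) and the product rule (\ref{eqn3_prod}) whenever an IBP derivative lands on $g$ (which produces the $g_{ij}$ combination), each such difference collapses to
\[
\bigl((-g_{ij}+g\varphi_{ij})u_{iK},u_{jK}\bigr)_\varphi + \int_{b\Omega}gr_{ij}u_{iK}\overline{u_{jK}}e^{-\varphi}\,dS + R_{ij},
\]
where $R_{ij}$ collects (a) absorbable $O(\|\sqrt{g}u\|_\varphi^2)$ lower-order errors from the symbols $c^i_{jk}$, $\sigma_j$, etc., and (b) ``twist remainders'' of the schematic form $\mathrm{Re}\bigl((L_j g)u_{jK},\overline{L_i}u_{iK}\bigr)_\varphi$ or $\mathrm{Re}\bigl((L_j g)u_{jK},\delta_{\omega_i}u_{iK}\bigr)_\varphi$ produced when an IBP derivative lands on $g$ in place of $u$.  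The Ahn--Zampieri splitting enters at the diagonal level: one derives, by a single IBP, the companion identity
\[
\|\sqrt{g}\,\overline{L_j}u_J\|_\varphi^2 = \|\sqrt{g}\,\delta_{\omega_j}u_J\|_\varphi^2 + \bigl((g_{jj}-g\varphi_{jj})u_J,u_J\bigr)_\varphi - \int_{b\Omega}gr_{jj}|u_J|^2 e^{-\varphi}\,dS + R_j',
\]
with $R_j'$ of types (a), (b) above, and invokes it only for $j\leq s$, converting the $\overline{L_j}$-derivative norms from the $\dbar$ expansion into the $\delta_{\omega_j}$-derivative norms plus the $j\leq s$ diagonal pair in the theorem, while for $j\geq s+1$ the $\|\sqrt{g}\overline{L_j}u_J\|_\varphi^2$ terms are left alone.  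The hypothesis $u\in\dom(\dbaradp)$ combined with $\overline{L_j}r=0$ on $b\Omega$ for $j\leq n-1$ kills all spurious boundary integrals; only the Levi-type $\int gr_{ij}u_{iK}\overline{u_{jK}}$ and the tangential-diagonal $\int gr_{jj}|u_J|^2$ survive.

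The type-(b) twist remainders are controlled by Cauchy--Schwarz with parameter $\gamma>0$:
\[
2\bigl|\mathrm{Re}\bigl((L_j g)u_{jK},\delta_{\omega_i}u_{iK}\bigr)_\varphi\bigr| \leq \tfrac{1}{\gamma}\bigl\|g^{-1/2}(L_j g)u_{jK}\bigr\|_\varphi^2 + \gamma\,\|\sqrt{g}\,\delta_{\omega_i}u_{iK}\|_\varphi^2,
\]
and analogously with $\delta_{\omega_i}u_{iK}$ replaced by $\overline{L_i}u_{iK}$.  After summation in $i,j,K$ the $\gamma^{-1}$ pieces are re-absorbed into $\|\sqrt{g}\dbaradp u\|_\varphi^2$ (respectively $\|\sqrt{g}\dbar u\|_\varphi^2$) on the left, yielding the advertised coefficient $(1+1/\gamma)$ on $\|\sqrt{g}\dbaradp u\|_\varphi^2$, while the $\gamma$-multiplied pieces coalesce into the explicit negative remainder $-\gamma\sumprime_{|J|=q}\sum_{j\leq n}\|g^{-1/2}(L_j g)u_J\|_\varphi^2$ on the right-hand side of (\ref{basic_es:1}).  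The main obstacle is the careful bookkeeping of this cascade: ensuring that exactly one $-\gamma$ remainder survives per index, that the signs of $g_{jj}$ and $g_{ij}$ come out consistently ($+g_{jj}$ on the $j\leq s$ diagonal versus $-g_{ij}$ in the off-diagonal sum, reflecting a single versus double IBP acting on the $g$-factor), and that all residual small terms collapse into the single constant $C_{\epsilon,\gamma}\|\sqrt{g}u\|_\varphi^2$ via a final $\epsilon$-Cauchy--Schwarz against the $(1-\epsilon)$-factor derivative norms on the right.
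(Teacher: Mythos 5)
Your proposal follows the same route as the paper: expand $\|\sqrt{g}\dbar u\|_\varphi^2$ and $\|\sqrt{g}\dbaradp u\|_\varphi^2$ in the special boundary chart via \eqref{eqn1}--\eqref{eqn2}, integrate by parts with Proposition~\ref{prop_intbyparts} using the commutator $[\delta_{\omega_i},\overline{L_j}]$ and the product rule \eqref{eqn3_prod} to move derivatives off $g$, split the tangential index at $s$ (so that for $j\le s$ the $\overline{L_j}$-norms convert to $\delta_{\omega_j}$-norms plus the diagonal $(g_{jj}-g\varphi_{jj})$ and Levi boundary terms, while $j\ge s+1$ is left alone), and then absorb the twist remainders by $\gamma$-weighted Cauchy--Schwarz to produce the $(1+1/\gamma)$ coefficient and the $-\gamma\|\,g^{-1/2}(L_jg)u_J\|^2_\varphi$ term, with all lower-order residue collected into $C_{\epsilon,\gamma}\|\sqrt{g}u\|_\varphi^2$. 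Aside from minor bookkeeping discrepancies (the paper carries the full $\sum_{i,j}$ sum including the diagonal rather than separating $i\neq j$, and the diagonal conversion in \eqref{es5} is a double rather than a single integration by parts), this is the argument given in the paper.
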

\begin{proof}
By \eqref{eqn1} and \eqref{eqn2}, we have
\begin{flalign*}
||\sqrt{g}\dbar u||^2_\varphi & =\sumprime_{|J|=q}\sum_{j\leq n}\int_\Omega g e^{-\varphi}|\overline{L_j}u_J|^2dV\nonumber \\
&\quad -\sumprime_{|K|=q-1}\sum_{i,j}\int_\Omega g e^{-\varphi}\overline{L_j}u_{iK}\overline{\overline{L_i}u_{jK}}dV+R, \\
||\sqrt{g}\dbaradp u||^2_\varphi & =\sumprime_{|K|=q-1}\sum_{i,j}\int_\Omega ge^{-\varphi}\delta_{\omega_i}u_{iK}\overline{\delta_{\omega_j} u_{jK}}dV+R,
\end{flalign*}
where $R$ denotes the error terms that involved with integrations of products of the type: $e^{-\varphi}g\overline{L_j}u_{iK}\cdot u$ or $e^{-\varphi}g\delta_{\omega_j}u_{jK}\cdot u$ with the coefficients independent of $g,\varphi$ that only depend on the derivatives of the coefficients of each $\omega_j$ ($1\leq i,j\leq n$).  Now we have
\begin{flalign}
||\sqrt{g}\dbar u||^2_\varphi & +||\sqrt{g}\dbaradp u||^2_\varphi  = \sumprime_{|J|=q}\sum_{j\leq n}\int_\Omega g e^{-\varphi}|\overline{L_j}u_J|^2dV \nonumber\\
& + \underbrace{\sumprime_{|K|=q-1}\sum_{i,j}\int_\Omega ge^{-\varphi}\bigl( \delta_{\omega_i}u_{iK}\overline{\delta_{\omega_j} u_{jK}}- \overline{L_j}u_{iK}\overline{\overline{L_i}u_{jK}}\bigr)dV}_\text{main part}+R \label{es1}
\end{flalign}
In the main part of \eqref{es1}, we apply the integration by parts formula \eqref{eqn_intbyparts}:
\begin{flalign}
\int_\Omega ge^{-\varphi}\delta_{\omega_i}u_{iK}\overline{\delta_{\omega_j} u_{jK}}dV & =(g\delta_{\omega_i}u_{iK},\delta_{\omega_j} u_{jK})_\varphi \nonumber\\
& =-(\overline{L_j}(g\delta_{\omega_i}u_{iK}),u_{jK})_\varphi+R,  \label{es2}
\end{flalign}
\begin{flalign}
-\int_\Omega ge^{-\varphi}\overline{L_j}u_{iK}\overline{\overline{L_i}u_{jK}}dV & =-(g\overline{L_j}u_{iK},\overline{L_i}u_{jK})_\varphi  \nonumber\\
& =(\delta_{\omega_i}(g\overline{L_j}u_{iK}),u_{jK})_\varphi+R.  \label{es3}
\end{flalign}
We remark here that in both equalities, the boundary integrals from the integration by parts vanish because $L_jr=0$ for $j<n$ on $b\Omega$ and $u_{nK}=0$ on $b\Omega$.  The $R$ term is the same type of the error terms in \eqref{es1}.

Now apply \eqref{eqn3_prod} to the first term in the right-hand side of \eqref{es3}, we have:
\begin{flalign}\label{es4}
\text{main part}& =\sumprime_{|K|=q-1}\sum_{i,j}\bigl[ (L_ig\overline{L_j}u_{iK}, u_{jK})_\varphi-(\delta_{\omega_i} u_{iK},u_{jK}L_jg)_\varphi\nonumber\\
&\quad+ (g[\delta_{\omega_i},\overline{L_j}]u_{iK},u_{jK})_\varphi\bigr]+R.
\end{flalign}
To handle the first term in the right-hand side of \eqref{es1}, we write it into the form of inner product:
$\sum'_{|J|=q}\sum_{j\leq n}(g\overline{L_j}u_J,\overline{L_j}u_J)_\varphi$, then we use double integration by parts in each inner product with indices $j\leq s$:
\begin{flalign}
& (g\overline{L_j}u_J,\overline{L_j}u_J)_\varphi \nonumber\\
& = -(\delj(g\overline{L_j}u_J),u_J)_\varphi+\tilde{R}_{j\leq s}\nonumber\\
&= - (L_j g\overline{L_j}u_J,u_J)_\varphi-(g\delj\overline{L_j}u_J,u_J)_\varphi+\tilde{R}_{j\leq s}\nonumber\\
&= -(L_j g\overline{L_j}u_J,u_J)_\varphi-(g[\delj,\overline{L_j}]u_J,u_J)_\varphi-\underbrace{(\overline{L_j}\delj u_J,gu_J)_\varphi}_\text{int. by parts}+\tilde{R}_{j\leq s}\nonumber\\
&=-(L_j g \overline{L_j}u_J,u_J)_\varphi+(\delj u_J,(L_jg)u_J)_\varphi+||\sqrt{g}\delj u_J||^2_\varphi\nonumber\\
&\quad- (g[\delj,\overline{L_j}]u_J,u_J)_\varphi+\tilde{R}_{j\leq s}, \label{es5}
\end{flalign}
where the error term $\tilde{R}_{j\leq s}$ is involved with integrations of products of the type:  $e^{-\varphi}g(\overline{L_j}u_J)u$ or $e^{-\varphi}g(\delj u_J)u$ with $j\leq s$, and the coefficients in these products are independent of $g,\varphi$ in $\tilde{R}_{j\leq s}$.  Since by assumption $s\leq n-1$, generically $j\leq n-1$ in the error term $\tilde{R}_{j\leq s}$.  We also used the product rule \eqref{eqn3_prod} and boundary assumptions (i.e., $L_jr=0$ for $j<n$ on $b\Omega$ and $u_{nK}=0$ on $b\Omega$) in the above equalities.

Hence we apply \eqref{es4} and \eqref{es5} to \eqref{es1} on the respective side:
\begin{flalign}
&||\sqrt{g}\dbar u||^2_\varphi  +||\sqrt{g}\dbaradp u||^2_\varphi \nonumber \\
& = \sumprime_{|J|=q}\sum_{j\geq s+1}(g\overline{L_j}u_J,\overline{L_j}u_J)_\varphi+\sumprime_{|J|=q}\sum_{j\leq s}(g\overline{L_j}u_J,\overline{L_j}u_J)_\varphi+\text{main part}+R\nonumber\\
& = \sumprime_{|J|=q}\bigl( \sum_{j\geq s+1}||\sqrt{g}\overline{L_j}u_J||_\varphi^2+\sum_{j\leq s}||\sqrt{g}\delj u_J||^2_\varphi \bigr)\nonumber\\
&\quad+ \sumprime_{|J|=q}\sum_{j\leq s}\bigl[ -(L_jg\overline{L_j}u_J,u_J)_\varphi+(\delj u_J,u_J L_j g )_\varphi-(g[\delj,\overline{L_j}]u_J,u_J)_\varphi \bigr]\nonumber\\
&\quad+ \sumprime_{|K|=q-1}\sum_{i,j}\bigl[ (L_jg\overline{L_j}u_{iK},u_{jK})_\varphi-(\deli u_{iK},u_{jK}L_j g )_\varphi\nonumber\\
&\quad+ (g[\deli,\overline{L_j}]u_{iK},u_{jK})_\varphi \bigr]+R+\tilde{R}_{j\leq s}.\label{es6}
\end{flalign}
We now handle the commutator term $[\deli,\overline{L_j}]$ by following the argument in \cite{ahn} and \cite{zam}.  By formula (1.9.17) in \cite{zam}, we have:
\[ [\deli,\overline{L_j}]=\varphi_{ij}+r_{ij}\delta_{\omega_n}-r_{ij}\overline{L_n}+B'_{ij}, \]
where $B'_{ij}=\sum_{l\leq n-1}c^l_{ji}\delta_{\omega_l}-\sum_{l\leq n-1}\bar{c}_{ij}^l\overline{L_l}$ denotes the combinations of the terms $\delta_{\omega_l}$ and $\overline{L_l}$ (for $l\leq n-1$).

Hence apply the above equality to the commutator terms in \eqref{es6} individually, we have:
\begin{flalign}\label{es7}
& (g[\deli,\overline{L_j}]u_{iK},u_{jK})_\varphi \nonumber\\
&= (g\varphi_{ij}u_{iK},u_{jK})_\varphi+(gr_{ij}\delta_{\omega_n}u_{iK},u_{jK})_\varphi\nonumber\\
&\quad- (gr_{ij}\overline{L_n}u_{iK},u_{jK})_\varphi+(g(B'_{ij}u_{iK}),u_{jK})_\varphi,
\end{flalign}
and for $j\leq s$, 
\begin{flalign}\label{es8}
& (g[\delj,\overline{L_j}]u_{J},u_{J})_\varphi\\
&= (g\varphi_{jj}u_{J},u_{J})_\varphi+(gr_{jj}\delta_{\omega_n}u_{J},u_{J})_\varphi-(gr_{jj}\overline{L_n}u_{J},u_{J})_\varphi+(g(B'_{jj}u_{J}),u_{J})_\varphi.\nonumber
\end{flalign}
Apply integration by parts to the second term in \eqref{es7} (and \eqref{es8}) and use $L_n(r)=1$ on $b\Omega$ under normalization, we have:
\begin{flalign*}
& (gr_{ij}\delta_{\omega_n}u_{iK},u_{jK})_\varphi\\
& = \int_{b\Omega}gr_{ij}u_{iK}\overline{u_{jK}}e^{-\varphi}dS-(u_{iK},g\overline{L_n}(\bar{r}_{ij})u_{jK})_\varphi \nonumber\\
&\quad- (r_{ij}gu_{iK},\overline{L_n}u_{jK})_\varphi-(r_{ij}u_{iK}L_ng,u_{jK})_\varphi.
\end{flalign*}
Apply the above equality to \eqref{es7} (and \eqref{es8}), we have:
\begin{flalign}\label{es9}
&(g[\deli,\overline{L_j}]u_{iK},u_{jK})_\varphi  \\
& = (g\varphi_{ij}u_{iK},u_{jK})_\varphi+\int_{b\Omega}gr_{ij}u_{iK}\overline{u_{jK}}e^{-\varphi}dS\underbrace{-(u_{iK},g\overline{L_n}(\bar{r}_{ij})u_{jK})_\varphi}_{W_2~\text{term}}\nonumber\\
&\quad \underbrace{-(r_{ij}gu_{iK},\overline{L_n}u_{jK})_\varphi}_{R_1~\text{term}}\underbrace{-(r_{ij}u_{iK}L_ng,u_{jK})_\varphi}_{W_1~\text{term}}\nonumber\\
&\quad \underbrace{-(gr_{ij}\overline{L_n}u_{iK},u_{jK})_\varphi}_{R_2~\text{term}}\underbrace{+(g(B'_{ij}u_{iK}),u_{jK})_\varphi}_{R_3~\text{term}},\nonumber
\end{flalign}
and for $j\leq s$ we have:
\begin{flalign}
&(g[\delj,\overline{L_j}]u_{J},u_{J})_\varphi  \nonumber\\
& = (g\varphi_{jj}u_{J},u_{J})_\varphi+\int_{b\Omega}gr_{jj}u_{J}\overline{u_{J}}e^{-\varphi}dS\underbrace{-(u_{J},g\overline{L_n}(\bar{r}_{jj})u_{J})_\varphi}_{X_2~\text{term}}\nonumber\\
&\quad \underbrace{-(r_{jj}gu_{J},\overline{L_n}u_{J})_\varphi}_{T_1~\text{term}}\underbrace{-(r_{jj}u_{J}L_ng,u_{J})_\varphi}_{X_1~\text{term}}\nonumber\\
& \quad \underbrace{-(gr_{jj}\overline{L_n}u_{J},u_{J})_\varphi}_{T_2~\text{term}}\underbrace{+(g(B'_{jj}u_{J}),u_{J})_\varphi}_{T_3~\text{term}}.
\label{es10}
\end{flalign}
Let $R^*$ denote the summation of $R$ and the terms of $\tilde{R}_{j\leq s}$, $X_1$, $X_2$, $W_1$, $W_2$, $R_1$ to $R_3$ and $T_1$ to $T_3$ over their respective indices.  We have the following claim regarding the estimate of $|R^*|$:  for any $0<\epsilon'<1$ and any $\gamma'>0$, there exists a constant $C_{\epsilon',\gamma'}>0$ independent of $u,g,\varphi$ such that
\begin{flalign}\label{es11}
|R^*| & \leq \epsilon'\sumprime_{|J|=q}\bigl(\sum_{j\geq s+1}||\sqrt{g}\overline{L_j}u_J||^2_{\varphi}+\sum_{j\leq s}||\sqrt{g}\delj u_J||^2_\varphi \bigr)\nonumber\\
&\quad+ \gamma'\sumprime_{|J|=q}\sum_{j\leq n}||\frac{1}{\sqrt{g}} (L_jg)u_J||^2_\varphi+C_{\epsilon',\gamma'}||\sqrt{g}u||^2_\varphi.
\end{flalign}
Let us postpone the proof of this claim to the end and see how the rest of the argument works.  Apply \eqref{es9} and \eqref{es10} to \eqref{es6}:
\begin{flalign}
&||\sqrt{g}\dbar u||^2_\varphi+||\sqrt{g}\dbaradp u||^2_\varphi\nonumber\\
&= \sumprime_{|J|=q}\bigl( \sum_{j\geq s+1}||\sqrt{g}\overline{L_j}u_J||_\varphi^2+\sum_{j\leq s}||\sqrt{g}\delj u_J||^2_\varphi \bigr)\nonumber\\
&\quad+ \sumprime_{|J|=q}\sum_{j\leq s}\underbrace{\bigl[ -(L_jg\overline{L_j}u_J,u_J)_\varphi+(\delj u_J,u_J L_j g )_\varphi\bigr]}_{\text{term}~Y}\nonumber\\
&\quad+ \sumprime_{|K|=q-1}\sum_{i,j}\underbrace{\bigl[ (L_ig\overline{L_j}u_{iK},u_{jK})_\varphi-(\deli u_{iK},u_{jK}L_j g )_\varphi\bigr]}_{\text{term}~X}\nonumber\\
&\quad+ \sumprime_{|J|=q}\sum_{j\leq s}\bigl( -(g\varphi_{jj}u_J,u_J)_\varphi-\int_{b\Omega}gr_{jj}u_J\overline{u_J}e^{-\varphi}dS \bigr)\nonumber\\
&\quad+ \sumprime_{|K|=q-1}\sum_{i,j}\bigl( (g\varphi_{ij}u_{iK},u_{jK})_\varphi+\int_{b\Omega}gr_{ij}u_{iK}\overline{u_{jK}}e^{-\varphi}dS \bigr)+R^*.\label{es16}
\end{flalign}
The term $X$ and term $Y$ are handled as follows.  Use integration by parts, the first term in $X$ and $Y$ respectively becomes
\begin{flalign*}
(L_ig\overline{L_j}u_{iK},u_{jK})_\varphi & =-(u_{iK}L_ig,\delj u_{jK})_\varphi-(u_{iK}\overline{L_j}L_ig,u_{jK})_\varphi+S,\\
-(L_jg\overline{L_j}u_{J},u_{J})_\varphi & =(u_{J}L_jg,\delj u_{J})_\varphi + (u_{J}\overline{L_j}L_jg,u_{J})_\varphi+S.
\end{flalign*}
$S$ denotes the error terms involved with integration of products of the type $e^{-\varphi}(L_i g)u_{iK}\cdot u$ or $e^{-\varphi}(L_j g)u_{J}\cdot u$, and hence they can be absorbed into the estimate of $R^*$ (i.e., the last two terms in the right side of \eqref{es11}) by applying Cauchy inequality ($2|ab|\leq \frac{1}{\epsilon}|a|^2+\epsilon |b|^2$).

Apply the above two equalities to $X$ and $Y$, we have:
\begin{flalign}
\text{term} ~X &= -2\text{Re}(u_{iK}L_ig,\delj u_{jK})_\varphi- \bigl( (\overline{L_j}L_i g)u_{iK},u_{jK} \bigr)_\varphi+S, \label{es12}\\
\text{term} ~Y &= 2\text{Re}(u_{J}L_jg,\delj u_{J})_\varphi + \bigl( (\overline{L_j}L_j g)u_{J},u_{J} \bigr)_\varphi+S. \label{es13}
\end{flalign}
Now we relate $g_{ij}$ to $\overline{L_j}L_ig$ and show that the error terms can still be absorbed into the estimate of $|R^*|$ in \eqref{es11}.  By definition and $\partial\dbar=-\dbar\partial$, $g_{ij}=\overline{L_j}L_ig+\sum_{l=1}^n c^l_{ji}L_lg$. Hence
\begin{flalign}
\bigl((\overline{L_j}L_jg) u_J,u_J\bigr)_\varphi & = (g_{jj}u_J,u_J)_\varphi - \sum_{l=1}^n (c_{jj}^l  u_J L_lg,u_J)_\varphi,\label{es14} \\
-\bigl((\overline{L_j}L_ig) u_{iK},u_{jK}\bigr)_\varphi & = -(g_{ij}u_{iK},u_{jK})_\varphi + \sum_{l=1}^n (c_{ji}^l  u_{iK} L_lg,u_{jK})_\varphi.\label{es15} 
\end{flalign}
Each last term in the above two equalities respectively can be absorbed into the estimate of $|R^*|$ in the same way as estimating the term $S$ above.  Now apply \eqref{es14}, \eqref{es15} to \eqref{es12}, \eqref{es13}, then combine them with   \eqref{es16} and apply the estimate \eqref{es11}:
\begin{flalign}
& ||\sqrt{g}\dbar u||^2_{\varphi}+||\sqrt{g}\dbaradp u||^2_\varphi+C_{\epsilon',\gamma'}||\sqrt{g}u||^2_\varphi+\gamma'\sumprime_{|J|=q}\sum_{j\leq n}||\frac{1}{\sqrt{g}}(L_jg)u_J||^2_\varphi\nonumber\\
& \geq (1-\epsilon')\sumprime_{|J|=q}\bigl( \sum_{j\geq s+1}||\sqrt{g}\overline{L_j}u_J||^2_\varphi + \sum_{j\leq s}||\sqrt{g}\delj u_J||^2_\varphi \bigr)\nonumber\\
&\quad+ 2\text{Re}\sumprime_{|J|=q}\sum_{j\leq s}(u_JL_jg,\delj u_J)_\varphi -2\text{Re}\sumprime_{|K|=q-1}\sum_{i,j}(u_{iK}L_ig,\delj u_{jK})_\varphi  \nonumber\\
&\quad+ \sumprime_{|J|=q}\sum_{j\leq s}\Bigl( \bigl(  (g_{jj}-g\varphi_{jj})u_J,u_J  \bigr)_\varphi -\int_{b\Omega}gr_{jj}u_J\overline{u_J}e^{-\varphi} dS     \Bigr)\nonumber\\
&\quad+ \sumprime_{|K|=q-1}\sum_{i,j}\Bigl( \bigl(  (-g_{ij}+g\varphi_{ij})u_{iK},u_{jK}  \bigr)_\varphi +\int_{b\Omega}gr_{ij}u_{iK}\overline{u_{jK}}e^{-\varphi} dS     \Bigr). \label{v2_es1}
\end{flalign}

To handle the second term in the right-hand side of \eqref{v2_es1}:
\begin{flalign}
& |2\text{Re}\sumprime_{|J|=q}\sum_{j\leq s}(u_JL_jg,\delj u_J)_\varphi| \nonumber\\
& \leq 2|| \sumprime_{|J|=q}\sum_{j\leq s} \frac{1}{\sqrt{g}}u_J L_j g ||_\varphi\cdot  || \sumprime_{|J|=q}\sum_{j\leq s} \sqrt{g} \delj u_J||_\varphi \nonumber\\
& \leq \frac{1}{\epsilon''} \sumprime_{|J|=q}\sum_{j\leq n}||\frac{1}{\sqrt{g}}(L_jg)u_J||^2_\varphi+\epsilon''|| \sumprime_{|J|=q}\sum_{j\leq s} \sqrt{g} \delj u_J||^2_\varphi.
\label{v2_es2}
\end{flalign}

Take $\epsilon''=\frac{1-\epsilon'}{2}$ (hence $0<\epsilon''<\frac{1}{2}$), and apply \eqref{v2_es2} to \eqref{v2_es1}:
\begin{flalign}
\cdots &  +(\gamma'+\frac{2}{1-\epsilon'})\sumprime_{|J|=q}\sum_{j\leq n}||\frac{1}{\sqrt{g}}(L_jg)u_J||^2_\varphi \nonumber\\
& \geq  \frac{(1-\epsilon')}{2}\sumprime_{|J|=q}\bigl( \sum_{j\geq s+1}||\sqrt{g}\overline{L_j}u_J||^2_\varphi + \sum_{j\leq s}||\sqrt{g}\delj u_J||^2_\varphi \bigr)  \nonumber\\
&   \quad-    2\text{Re}\sumprime_{|K|=q-1}\sum_{i,j}(u_{iK}L_ig,\delj u_{jK})_\varphi +\cdots. 
\label{v2_es3}
\end{flalign}
The symbol $\cdots$ in \eqref{v2_es3} denotes the terms in \eqref{v2_es1} that stay unchanged. Now we handle the second term in the right-hand side of \eqref{v2_es3}.  Apply \eqref{eqn2} and the fact that $u\in \dom(\dbarad)$, then we have:
\begin{flalign}\label{v2_es4}
& -2\text{Re}\sumprime_{|K|=q-1}\sum_{i,j}(u_{iK}L_ig,\delj u_{jK})_\varphi \nonumber\\
& =  2\text{Re} \Big(  \sumprime_{|K|=q-1}\sum_{i}u_{iK}L_ig~\bar{\omega}_K,\dbarad_\varphi u  \Big)_\varphi +R'.
\end{flalign}
$R'$ denotes the error terms involved with integrations of products of the type $e^{-\varphi}(L_i g)u_{iK}\cdot u$.  It is clear that $|R'|\leq \epsilon_1 \displaystyle\sumprime_{|J|=q}\sum_{j\leq n}||\frac{1}{\sqrt{g}}(L_jg)u_J||^2_\varphi+\frac{1}{\epsilon_1}||\sqrt{g} u||^2_\varphi$ for any $\epsilon_1>0$. So $R'$ can be absorbed into the left-hand side of \eqref{v2_es3} by taking $\epsilon_1$ sufficiently small.

Now for the $\gamma$ given in the hypothesis, the first term in the right-hand side of \eqref{v2_es4} becomes:
\begin{flalign}
& |2\text{Re} \Big(  \sumprime_{|K|=q-1}\sum_{i}u_{iK}L_ig~\bar{\omega}_K, \dbarad_\varphi u  \Big)_\varphi |\nonumber\\
& \leq 2 ||  \sumprime_{|K|=q-1}\sum_{i} \frac{1}{\sqrt{g}} u_{iK}L_ig ~\bar{\omega}_K||_\varphi\cdot ||\sqrt{g} \dbarad_\varphi u||_\varphi \nonumber\\
&  \leq   \gamma \sumprime_{|J|=q}\sum_{j\leq n}||\frac{1}{\sqrt{g}}(L_jg)u_J||^2_\varphi+\frac{1}{\gamma} ||\sqrt{g} \dbarad_\varphi u||^2_\varphi.
\label{v2_es5}
\end{flalign}
Apply \eqref{v2_es4} and \eqref{v2_es5} to \eqref{v2_es3}, we have:
\begin{align}
&|| \sqrt{g}\dbar u||_\varphi^2  +  (1+\frac{1}{\gamma})||\sqrt{g}\dbarad_\varphi u||_\varphi^2+C_{\epsilon' ,\gamma'}||\sqrt{g} u||_\varphi^2 \nonumber \\
 &\geq -(\gamma+\gamma'+\frac{2}{1-\epsilon'})\sumprime_{|J|=q}\sum_{j\leq n}||\frac{1}{\sqrt{g}}(L_jg)u_J||_\varphi^2 \nonumber\\
&\quad+ \frac{1-\epsilon'}{2}\sumprime_{|J|=q}\Bigl( \sum_{j\geq s+1}||\sqrt{g} \overline{L_j} u_J||_\varphi^2+\sum_{j\leq s}||\sqrt{g}\delta_{\omega_j}u_J||^2_\varphi \Bigr)\nonumber \\
&\quad+ \sumprime_{|J|=q}\sum_{j\leq s}\Bigl(\bigl((g_{jj}-g\varphi_{jj})u_J, u_J\bigr)_\varphi-\int_{b\Omega} gr_{jj} u_J\overline{u_J} e^{-\varphi}dS  \Bigr)   \nonumber\\
&\quad+ \sumprime_{|K|=q-1}\sum_{i,j}\Bigl(\bigl((-g_{ij}+g\varphi_{ij})u_{iK}, u_{jK}\bigr)_\varphi+\int_{b\Omega} gr_{ij} u_{iK}\overline{u_{jK}} e^{-\varphi}dS  \Bigr).  \nonumber
\end{align}
Given $\epsilon$ in the hypothesis, we take $\epsilon'=1-2\epsilon$, $\gamma'\ll\gamma$ and absorb terms, then the theorem follows.

Now we prove the estimate \eqref{es11} of $|R^*|$ and this shall complete the proof.  If the terms in $R^*$ are involved with products of $g(\overline{L_j}u_J)\bar{u}$ for $j\geq s+1$ or $g(\delj u_J)\bar{u}$ for $j\leq s$, apply the Cauchy inequality:
\begin{flalign}
\bigl| \int_\Omega g(\overline{L_j}u_J)\bar{u}e^{-\varphi}dV \bigr| & \lesssim\epsilon'||\sqrt{g}\overline{L_j}u_J||^2_\varphi+C_{\epsilon'}||\sqrt{g}u||^2_\varphi,~~j\geq s+1,\nonumber\\
\bigl| \int_\Omega g(\delj u_J)\bar{u}e^{-\varphi}dV \bigr| & \lesssim\epsilon'||\sqrt{g}\delj u_J||^2_\varphi+C_{\epsilon'}||\sqrt{g}u||^2_\varphi,~~j\leq s.\label{es17}
\end{flalign}
The terms on the right side of \eqref{es17} are precisely contained in the right side of \eqref{es11}.  The above argument covers the estimates of terms $R_1$, $R_2$, $T_1$, $T_2$ completely.

The estimates of terms $X_1$, $X_2$, $W_1$ and $W_2$  follow trivially by using the Cauchy inequality and estimating upper bound of the derivatives of $r$, which are contained in the right-hand side of \eqref{es11} as well.

To estimate $R_3$ and $T_3$, we write
\begin{equation}
(gB'_{ij}u_{iK},u_{jK})_\varphi=\sum_{l\leq n-1}\left[ (gc^l_{ji}\delta_{\omega_l}u_{iK},u_{jK})_\varphi - (g\bar{c}^l_{ij}\overline{L_l}u_{iK},u_{jK})_\varphi \right]. \label{es18}
\end{equation}
For the terms $(gc^l_{ji}\delta_{\omega_l}u_{iK},u_{jK})_\varphi$ with $l\leq s$, we estimate them in the same way as in \eqref{es17} and hence the resulting terms are in the right side of \eqref{es11}.

For the terms $(gc^l_{ji}\delta_{\omega_l}u_{iK},u_{jK})_\varphi$ with $s+1\leq l\leq n-1$, we apply integration by parts and 
\begin{flalign}
& \bigl| \sum_{l\geq s+1}^{n-1}(gc^l_{ji}\delta_{\omega_l}u_{iK},u_{jK})_\varphi  \bigr|\nonumber\\
& \leq \sum_{l\geq s+1}^{n-1}\Big[ |(c_{ji}^l(L_lg)u_{iK},u_{jK})_\varphi| + |(gu_{iK}(L_lc^l_{ji}),u_{jK})_\varphi|\nonumber\\
& \quad + |(gc^l_{ji}u_{iK},\overline{L_l}u_{jK})_\varphi| \Big].
\end{flalign}
Apply the Cauchy inequality to each term on the right side of the above inequality, it is clear to see that the resulting terms are contained in the right side of \eqref{es11}.  Note that the boundary integral vanishes by the fact that $L_jr=0$ for $j<n$ on $b\Omega$ and $u_{nK}=0$ on $b\Omega$.

For the terms $(g\bar{c}^l_{ij}\overline{L_l}u_{iK},u_{jK})_\varphi$, we argue in the same way as above: if $l\geq s+1$, apply the Cauchy inequality;  if $l\leq s$, we can again interchange $\overline{L_l}$ terms with $\delta_{\omega_l}$ terms by integration by parts and then use the Cauchy inequality.  The boundary integral vanishes again since $l\leq s\leq n-1$.  Hence the resulting terms in the estimates of $R_3$ and $T_3$ terms are contained in \eqref{es11}.

If the term $R^*$ is involved with the term of the type $g\delta_{\omega_n}(u_{nK})\bar{u}$, we apply integration by parts first:
\begin{equation}\label{es19}
|(g\delta_{\omega_n}(u_{nK}),u)_\varphi|\leq |(u_{nK},(\overline{L_n}g)u)_\varphi|+|(u_{nK},g\overline{L_n}u)_\varphi|.
\end{equation}

The boundary integral vanishes since $u_{nK}=0$ on $b\Omega$.  The second term on the right side of \eqref{es19} can be estimated in the same way as \eqref{es17}.  Apply the Cauchy inequality to the first term on the right side of \eqref{es19}, the resulting terms are in the right side of \eqref{es11}.

Now apply the above estimates of $g\delta_{\omega_n}(u_{nK})\bar{u}$ to the corresponding terms of $R$, and the rest of the terms in $R$  are estimated in the same way as we did to  $R_i$ and $T_i$ ($1\leq i\leq 3$).  For the terms of $\tilde{R}_{j\leq s}$, our argument is the same since all terms of $\tilde{R}_{j\leq s}$ are the known terms which we have estimated above.  The proof of \eqref{es11} is done.
\end{proof}

\begin{rem}\label{rem_density}
(1) In viewing the argument along \eqref{es14} and \eqref{es15}, when we only consider the unified estimate \eqref{basic_es:1}, the term $g_{ij}$ is essentially comparable to $L_i\overline{L_j}g$ (or $\overline{L_j}L_ig$) with an error term of the sum over first order derivatives $L_ig$ (or $\overline{L_i}g$) and the coefficients only depend on $c^i_{jk}$'s.  By the argument in the above theorem, such error terms can be absorbed again.  Hence we can replace $g_{ij}$ with $L_i\overline{L_j}g$ and $g_{jj}$ with $L_j\overline{L_j}g$ in the unified estimate \eqref{basic_es:1} if necessary.

(2)  By exchanging basis in the complex tangents chart, the index of the terms in the double integration of formula \eqref{es5} can be changed from $\lbrace j\leq s\rbrace$ to any ordered index set $I_s=\lbrace j_k: 1\leq k\leq s\rbrace\subset \lbrace 1,2,\cdots,n-1  \rbrace$, and from  $\lbrace j\geq s+1\rbrace$ to $J_s=\lbrace 1,2,\cdots,n\rbrace\backslash I_s $.    The same modification on indices can be applied to the error estimate  \eqref{es11}.  Since the ordered index set $I_s$ only impacts on tangential forms, it is then clear that the unified estimate \eqref{basic_es:1} has the following generalization:
\end{rem}
\begin{cor}\label{cor_of_unified_es}
Let $\Omega$, $r$, $g$, $\varphi$ and $u$ be the same as in \thmref{basic_es}.  Let $\gamma>0$ and $0<\epsilon<\frac{1}{2}$ be arbitrary, then we have for every ordered index set $I_s=\lbrace j_k: 1\leq k\leq s\rbrace\subset \lbrace 1,2,\cdots,n-1  \rbrace$ and $J_s=\lbrace 1,2,\cdots,n\rbrace\backslash I_s$ with $1\leq s\leq n-1$, there is a constant $C_{\epsilon ,\gamma}>0$ independent of $u$, $\varphi$ and $g$ such that:
\begin{align}\label{basic_es:2}
& || \sqrt{g}\dbar u||_\varphi^2  +  (1+\frac{1}{\gamma})||\sqrt{g}\dbarad_\varphi u||_\varphi^2+C_{\epsilon ,\gamma}||\sqrt{g} u||_\varphi^2  \\
 &\geq -(\gamma+\frac{1}{\epsilon})\sumprime_{|J|=q}\sum_{j\leq n}||\frac{1}{\sqrt{g}}(L_jg)u_J||_\varphi^2 \nonumber\\
 &\quad+ \epsilon\sumprime_{|J|=q}\Bigl( \sum_{j\in J_s}||\sqrt{g} \overline{L_j} u_J||_\varphi^2+\sum_{j\in I_s}||\sqrt{g}\delta_{\omega_j}u_J||^2_\varphi \Bigr)\nonumber \\
&\quad+ \sumprime_{|J|=q}\sum_{j\in I_s}\Bigl(\bigl((g_{jj}-g\varphi_{jj})u_J, u_J\bigr)_\varphi-\int_{b\Omega} gr_{jj} u_J\overline{u_J} e^{-\varphi}dS  \Bigr)   \nonumber\\
&\quad+ \sumprime_{|K|=q-1}\sum_{i,j}\Bigl(\bigl((-g_{ij}+g\varphi_{ij})u_{iK}, u_{jK}\bigr)_\varphi+\int_{b\Omega} gr_{ij} u_{iK}\overline{u_{jK}} e^{-\varphi}dS  \Bigr). \nonumber
\end{align}
\end{cor}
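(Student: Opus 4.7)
The plan is to reduce Corollary \ref{cor_of_unified_es} to \thmref{basic_es} by a pointwise permutation of the tangential frame, exactly as hinted in Remark \ref{rem_density}(3). Since $I_s \subset \{1, 2, \ldots, n-1\}$, the complex normal direction $L_n$ is untouched, so the special boundary chart structure is preserved. Fix a permutation $\pi$ of $\{1, 2, \ldots, n-1\}$ with $\pi(\{1, \ldots, s\}) = I_s$ (arranged so that the order on $I_s$ is respected), set $\widetilde{L}_k = L_{\pi(k)}$ for $k \leq n-1$, and $\widetilde{L}_n = L_n$. The frame $\{\widetilde{L}_j\}_{j=1}^n$ is still an orthonormal frame near $P$ with $\widetilde{L}_n$ equal to the complex normal, so it yields a valid special boundary chart with dual $(1,0)$-forms $\{\widetilde{\omega}_j\}$.

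Next I would rewrite $u$ in the new chart as $u = \sumprime_J \widetilde{u}_J \,\widetilde{\overline{\omega}}_J$. Because the frame change is a pointwise permutation, the decomposition into tangential/normal parts and the boundary condition $u \in \dom(\dbarad)$ transfer directly: $\widetilde{u}_J \equiv 0$ on $b\Omega$ whenever $n \in J$. Moreover, the structure coefficients $\widetilde{c}^l_{jk}$, as well as the entries $\widetilde{g}_{jk}$, $\widetilde{r}_{jk}$ and $\widetilde{\varphi}_{jk}$, are just the corresponding $c^l_{\cdot\cdot}$, $g_{\cdot\cdot}$, $r_{\cdot\cdot}$ and $\varphi_{\cdot\cdot}$ with indices permuted by $\pi$. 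Thus all weighted $L^2$-norms, the integration by parts formula \eqref{eqn_intbyparts}, and the commutator identity from formula (1.9.17) of \cite{zam} used in the proof of \thmref{basic_es} are valid verbatim in the tilde frame. In particular, $\widetilde{L}_k r = 0$ on $b\Omega$ for every $k \leq n-1$, which is exactly what is needed to run the double integration by parts \eqref{es5} for $k \leq s$, and all vanishings of boundary terms in \eqref{es2}, \eqref{es3} as well as in the $B'$-contribution remain in force.

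With these observations in hand I would invoke \thmref{basic_es} in the $\widetilde{L}$-frame applied to the integer $s$ and the form $u$. This produces the estimate \eqref{basic_es:1} with all sums over $j \leq s$, respectively $j \geq s+1$, interpreted in the new frame. Translating back via $\pi$, the sum over new indices $k \leq s$ becomes the sum over $j \in I_s$, and the sum over new indices $k \geq s+1$ becomes the sum over $j \in J_s$, which is precisely \eqref{basic_es:2}. The constants $C_{\epsilon,\gamma}$ are unchanged since they do not depend on the particular labeling of the tangential frame.

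The main obstacle I anticipate is bookkeeping: one must verify that the diagonal Hessian entries $g_{jj} - g\varphi_{jj}$ for $j \in I_s$ and the boundary integrands $gr_{jj}$ really coincide with the pullbacks of the analogous quantities from the $\widetilde{L}$-frame, and that the cross-term sums $\sumprime_{|K|=q-1} \sum_{i,j}$ (whose indices range over all of $\{1,\ldots,n\}$) are preserved. Because the frame change is a pointwise permutation rather than a general unitary rotation, no off-diagonal cross terms are created, no extra commutator contributions arise from $[\widetilde{L}_j, \widetilde{L}_k]$ beyond those already captured by the $\widetilde{c}^l_{jk}$, and the effect on the coefficients $u_J$ is a mere relabeling of multi-indices, so this verification is straightforward once each term is tracked carefully.
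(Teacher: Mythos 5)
Your proposal is correct and takes essentially the same approach as the paper, which justifies the corollary in Remark~\ref{rem_density}(3) simply by observing that the index set $\{j \leq s\}$ in the double integration by parts \eqref{es5} can be replaced by any ordered $I_s \subset \{1,\ldots,n-1\}$ via an exchange of basis in the complex tangent chart. You have filled in the details of that basis permutation, verifying that the orthonormality, boundary conditions, commutator identities, and vanishing of boundary terms all transfer verbatim to the permuted frame.
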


Now we  prove a lemma about the behavior of the Levi form on the boundary of a smooth pseudoconvex domain, and then  derive an estimate from \eqref{basic_es:2} for future use. 

\begin{lem}\label{lemma_pq_1}
Given a smooth bounded pseudoconvex domain $\Omega\subset\mathbb{C}^n$ $(n>2)$ with the defining function $r$, for a fixed $s$ $(1\leq s \leq n-2)$, there exists $q_0$ $(s+1\leq q_0\leq n-1)$ and an ordered index set $I_s=\lbrace j_k,1\leq k\leq s\rbrace\subset\lbrace 1,\cdots,n-1\rbrace$ such that the following inequality holds for any $u\in C^\infty_{(0,q)}(\overline{\Omega})\cap\dom(\dbarad)$ with $q\geq q_0$:
\begin{equation}\label{eqn42}
\sideset{}{'}\sum_{|K|=q-1}\ \sum_{j,k=1}^n\int_{b\Omega}r_{jk} u_{jK} \overline{u_{kK}}e^{-\varphi}~dS-\sideset{}{'}\sum_{|J|=q}\ \sum_{j\in I_s}\int_{b\Omega}r_{j j}|u_J|^2e^{-\varphi}~dS \geq 0.
\end{equation}
In particular, if $s=n-2$, then $q_0=n-1$ and $I_s=\lbrace 1,\cdots,n-1\rbrace\backslash\lbrace t\rbrace$ for any $1\leq t\leq n-1$.
\end{lem}
\begin{proof}
If $s=n-2$, we take $q_0=n-1$ and $I_s=\lbrace 1,\cdots,n-1\rbrace\backslash\lbrace t\rbrace$, and a direct computation shows that:
\begin{flalign}
&\sumprime_{|K|=n-2}\sum_{j,k=1}\int_{b\Omega} gr_{jk}u_{jK}\overline{u_{kK}} e^{-\varphi} dS-\sumprime_{|J|=n-1}\sum_{j\in I_s}\int_{b\Omega}gr_{jj}u_J\overline{u_J}e^{-\varphi}dS\nonumber\\
&=\int_{b\Omega}gr_{tt}|u_{1,2,\cdots,n-1}|^2e^{-\varphi}dS\geq 0.\label{es20}
\end{flalign}
Note that $r_{jj}\geq 0$ on $b\Omega$ for all $j\leq n-1$ by the pseudoconvexity of $\Omega$.

For general $s$,  one can take $q_0=n-1$ (but such $q_0$ may not be optimal). Under such an choice of $q_0$, $I_s=\lbrace j_k,1\leq k\leq s\rbrace\subset\lbrace 1,\cdots,n-1\rbrace$ can be arbitrary.  By Schur majorization theorem,  the sum of smallest $q$ eigenvalues of an $n\times n$ Hermitian matrix is less than or equal to the sum of smallest $q$ diagonal entries of the same matrix ($1\leq q\leq n$).  Apply this fact together with Lemma \ref{0lemma7} implies that the best possible $q_0$ is equal to $s+1$.
\end{proof}

We note that if the index set $I_s$ in Lemma \ref{lemma_pq_1} is taken arbitrarily, this results in a weaker version of Lemma \ref{lemma_pq_1}.  This weaker result is implicitly used in the definition of Property $(P_q^\#)$ and Property $(\widetilde{P}_{q}^\#)$.
\begin{lem}\label{lemma_pq_2}
Let $\Omega$ be the same as in Lemma \ref{lemma_pq_1}. For a fixed $s$ $(1\leq s \leq n-2)$, there exists $q_0$ $(s+1\leq q_0\leq n-1)$ such that the inequality \eqref{eqn42} holds for any $u\in C^\infty_{(0,q)}(\overline{\Omega})\cap\dom(\dbarad)$ with $q\geq q_0$ and any ordered index set $I_s=\lbrace j_k,1\leq k\leq s\rbrace\subset\lbrace 1,\cdots,n-1\rbrace$.  In particular, if $s=n-2$, then $q_0=n-1$.
\end{lem}

\begin{example}
We give an example that the $q_0$ in Lemma \ref{lemma_pq_1} obtains the best possible value with a specific index set $I_s$. Near a boundary point $P$ of $\Omega$ in \lemref{lemma_pq_1}, suppose the Levi form $(r_{jk})$ (of size $(n-1)\times (n-1)$) is diagonalized, then the diagonal entries now are precisely equal to eigenvalues of the Levi form.  Given any $s$, select $s$ smallest eigenvalues in the Levi form, and let $I_s$ be their indices. Take $q_0=s+1$, then by \lemref{0lemma7}, the inequality \eqref{eqn42} holds near $P$ for any $q\geq q_0$. 
\end{example}

\begin{rem}
We mention that the left-hand side of (\ref{eqn42}) is purely determined by the behavior of the Levi form on the boundary.  In fact, the first term of the left-hand side of (\ref{eqn42}) characterizes the sum of smallest $q$ eigenvalues on the Levi form restricted to the complex tangents space in the boundary (compare Lemma \ref{0lemma7}) and the second term in (\ref{eqn42}) is only involved with the diagonal entries of the Levi form on the boundary, which we know by the pseudoconvexity of $\Omega$, $r_{jj}\geq 0$ on $b\Omega$ for $1\leq j\leq n-1$.
\end{rem}

\begin{prop}\label{v2_tildeprop}
Let $\Omega$ be a smooth bounded pseudoconvex domain in $\mathbb{C}^n$ $(n>2)$.  For a fixed $s$ $(1\leq s\leq n-2)$, let $q_0$ and the ordered index set $I_s$ defined as in Lemma \ref{lemma_pq_1}, such that the inequality \eqref{eqn42} holds. Let $U$ be an open neighborhood of any boundary point $P$.  Suppose $\phi\in C^2(\bar{\Omega})$, $u=\sum_J' u_J\bar{\omega}_J\in C^\infty_{(0,q)}(\bar{\Omega})\cap\dom(\dbarad)$ with support in $\bar{\Omega}\cap U$.  Let $\gamma>0$ be arbitrary, there is a constant $C_\gamma>0$ independent of $u$ and $\phi$ such that:
\begin{flalign}\label{v2_tilede1}
& ||\dbar u||^2_{2\phi}+(1+\frac{1}{\gamma})||\dbaradphi u||^2_{2\phi}+C_\gamma ||u||^2_{2\phi}  \nonumber\\
& \geq -(\gamma+q+4)\sumprime_{|J|=q}\sum_{j\leq n}||(L_j\phi)u_J||_{2\phi}^2 \nonumber\\
& \quad+ 2\int_\Omega e^{-2\phi}\Big(  \sumprime_{|K|=q-1}\sum_{i,j}\phi_{ij}u_{iK}\overline{u_{jK}}-\sumprime_{|J|=q}\sum_{j \in I_s}\phi_{jj}|u_{J}|^2  \Big)~dV.
\end{flalign}
\end{prop}
\begin{proof}
We start with the estimate \eqref{basic_es:2} and take $\varphi=\phi$, $g=e^{-\phi}$ and $\epsilon=\frac{1}{4}$. By using the definition of $f_{ij}=L_i\overline
{L_j}f+\sum_{l=1}^n\bar{c}^l_{ij}\overline{L_l}f$, it is clear that
\begin{flalign}
g_{ij} & = -e^{-\phi}\phi_{ij}+e^{-\phi} L_i\phi \overline{L_j}\phi , \nonumber\\
g_{ij}-g\varphi_{ij} &= -2e^{-\phi}\phi_{ij}+e^{-\phi}L_i\phi \overline{L_j}\phi ,\nonumber\\
||\frac{1}{\sqrt{g}}(L_j g)u_J||^2_\phi & = \int_\Omega |(L_j \phi) u_J|^2 e^{-2\phi}dV = ||(L_j\phi)u_J||^2_{2\phi}. \label{tilde2}
\end{flalign}
Apply \eqref{eqn42} and \eqref{tilde2} to \eqref{basic_es:2}, we have:
\begin{flalign}
& ||\dbar u||^2_{2\phi}+(1+\frac{1}{\gamma})||\dbaradphi u||^2_{2\phi}+C_\gamma ||u||^2_{2\phi}  \\
& \geq -(\gamma+4) \sumprime_{|J|=q}\sum_{j\leq n} ||(L_j\phi)u_J||^2_{2\phi}  \nonumber\\
& \quad+ 2 \int_\Omega e^{-2\phi}\Big(  \sumprime_{|K|=q-1}\sum_{i,j}\phi_{ij}u_{iK}\overline{u_{jK}}-\sumprime_{|J|=q}\sum_{j \in I_s}\phi_{jj}(z)|u_{J}|^2  \Big)~dV \nonumber\\
& \quad+ \underbrace{\int_\Omega e^{-2\phi}\Big( \sumprime_{|J|=q}\sum_{j \in I_s} |(L_j\phi)u_J|^2- \sumprime_{|K|=q-1}\sum_{i,j} (L_i\phi) u_{iK}\overline{(L_j\phi) u_{jK}}  \Big)~dV}_{\text{part X}}.\nonumber
\label{v2_tilde_eq1}
\end{flalign}
We use the non-negativity of the first summation in part X. To estimate the second summation in part X, we have:  
\begin{flalign}
& \Big| \int_\Omega e^{-2\phi} \sumprime_{|K|=q-1}\sum_{i,j} (L_i\phi) u_{iK}\overline{(L_j\phi) u_{jK}}  ~dV\Big| \nonumber\\
& \leq  || \sumprime_{|K|=q-1}\sum_{j\leq n} (L_j\phi)u_{jK}\overline{\omega}_K ||^2_{2\phi} \nonumber\\
& \leq  \sumprime_{|K|=q-1}\sum_{j\leq n} || (L_j\phi)u_{jK}||^2_{2\phi}\nonumber\\
& = q \sumprime_{|J|=q}\sum_{j\leq n} || (L_j\phi)u_{J}||^2_{2\phi}.
\end{flalign}
Apply the above estimate to part X, the proof is complete.
\end{proof}

\begin{rem}
It is clear that Proposition \ref{v2_tildeprop} has a similar version in the context of Lemma \ref{lemma_pq_2}.
\end{rem}

\section{The variant of Property $(\widetilde{P}_{n-1})$} \label{v2_sec4}
In this section we define a variant of Property $(\widetilde{P}_{n-1})$, and prove that this condition implies  compactness of $N_{n-1}$ on $L^2_{(0,n-1)}(\Omega)$.  We start with the following proposition which is a direct consequence of Proposition \ref{v2_tildeprop}.
\begin{prop}\label{tildeprop}
Let $\Omega$ be a smooth bounded pseudoconvex domain in $\mathbb{C}^n$ $(n>2)$ and $U$ be an open neighborhood of any boundary point $P$.  Suppose $\phi\in C^2(\bar{\Omega})$, $u=\sum_J' u_J\bar{\omega}_J\in C^\infty_{(0,n-1)}(\bar{\Omega})\cap\dom(\dbarad)$ with support in $\bar{\Omega}\cap U$.  Let $\gamma>0$ be arbitrary, there is a constant $C_\gamma>0$ independent of $u$ and $\phi$ such that:
\begin{flalign}\label{tilde5}
& \int_\Omega\Bigl( -(\gamma+n+3)\sum_{j\leq n}|L_j\phi|^2 + 2\phi_{tt} \Bigr)|u_{1,\cdots ,n-1}|^2 e^{-2\phi}dV\nonumber\\
& \leq (||\dbar u||^2_{2\phi}+(1+\frac{1}{\gamma})||\dbaradphi u||^2_{2\phi}+C_\gamma ||u||^2_{2\phi})+C_{\phi,\gamma}||ue^{-\phi}||^2_{-1}.
\end{flalign}
for any $1\leq t \leq n-1$.
\end{prop}
\begin{proof}
By Lemma \ref{lemma_pq_2}, we take $s=n-2$, $q_0=q=n-1$ and $I_s=\lbrace 1,\cdots,n-1\rbrace \backslash \lbrace t \rbrace$.  We mention that the index set $I_s=I_{n-2}$ is arbitrary here (see the remarks before Lemma \ref{lemma_pq_2}).  It is clear that Proposition \ref{v2_tildeprop} can be applied.  A direct computation shows that 
\begin{flalign}
&  \sumprime_{|K|=q-1}\sum_{i,j}\phi_{ij}u_{iK}\overline{u_{jK}}-\sumprime_{|J|=q}\sum_{j \in I_s}\phi_{jj}(z)|u_{J}|^2 \nonumber \\
& = \phi_{tt}|u_{1,\cdots,n-1}|^2+\sumprime_{|K^*|=n-2}\sum_{i,j}\phi_{ij}u_{iK^*}\overline{u_{jK^*}}, \nonumber
\end{flalign}
where the tuples $iK^*$ and $jK^*$ contain $n$.  Apply the above equality in \eqref{v2_tilede1}, we have:
\begin{flalign}
& ||\dbar u||^2_{2\phi}+(1+\frac{1}{\gamma})||\dbaradphi u||^2_{2\phi}+C_\gamma ||u||^2_{2\phi} \nonumber \\
&  \geq \int_\Omega\bigl[-(\gamma+n+3)\sum_{j\leq n}|L_j\phi|^2+2\phi_{tt}\bigr]\cdot|u_{1,\cdots ,n-1}|^2 e^{-2\phi}dV \nonumber\\
& \quad- (\gamma+n+3)\sumprime_{|J^*|=n-1}\sum_{j\leq n}||L_j\phi u_{J^*}||_{2\phi}^2\nonumber\\
&\quad+ 2\int_\Omega \sumprime_{|K^*|=n-2}\sum_{i,j}\phi_{ij}u_{iK^*}\overline{u_{jK^*}} e^{-2\phi}~dV,\label{tilde3}
\end{flalign}
where the tuples $J^*$, $iK^*$ and $jK^*$ contain $n$.  To estimate the last two terms in \eqref{tilde3}, we use classical elliptic regularity arguments for the normal part of $u$ (details of such arguments can be found in \cite{chenshaw}, \cite{fbkohn} or \cite{stra1}):
\begin{equation}\label{tilde42}
||(u_{\textrm{Norm}})_I||_{2\phi}\leq \epsilon(||u||_{2\phi}^2+||\dbar u||_{2\phi}^2+||\dbaradphi u||_{2\phi}^2)+C_{\phi,\gamma}||u e^{-\phi}||_{-1}^2.
\end{equation}
Here $(u_{\textrm{Norm}})_I$ denotes the coefficients of the normal part of $u$ with the tuple $I$ containing $n$, and $C_{\gamma,\phi}$ is a positive constant dependent on $\gamma$ and $\phi$.  Note that here $\epsilon$ can be made arbitrarily small to  absorb the coefficients involved with the derivatives of $\phi$ in the last two terms on the right side of \eqref{tilde3}.
Now first apply the Cauchy inequality to the last two terms on the right side of \eqref{tilde3}, use \eqref{tilde42} to estimate the normal part of $u$, absorb terms and hence we have:
\begin{flalign}\label{tilde5}
& \int_\Omega\Bigl( -(\gamma+n+3)\sum_{j\leq n}|L_j\phi|^2 + 2\phi_{tt} \Bigr)|u_{1,\cdots ,n-1}|^2 e^{-2\phi}dV\nonumber\\
& \leq (||\dbar u||^2_{2\phi}+(1+\frac{1}{\gamma})||\dbaradphi u||^2_{2\phi}+C_\gamma ||u||^2_{2\phi})+C_{\phi,\gamma}||ue^{-\phi}||^2_{-1}.\nonumber
\end{flalign}
This completes the proof.
\end{proof}

In viewing Proposition~\ref{tildeprop}, we define the variant of Property $(\widetilde{P}_{n-1})$ as follows.
\begin{defn}\label{defntilde}
For a smooth bounded pseudoconvex domain $\Omega$ in $\mathbb{C}^n$ ($n>2$), $b\Omega$ has Property $(\widetilde{P}^\#_{n-1})$ if there exists a finite cover $\lbrace V_j\rbrace_{j=1}^N$ of $b\Omega$ with special boundary charts defined on each $V_j$ and the following holds on each $V_j$:  for any $M>0$, there exists an open neighborhood $U$  of $b\Omega$ and a $C^2$ smooth function $\lambda$ on $U\cap V_j$, such that $\lambda_{tt}(z)\geq M$ and $\sum_{i=1}^n |L_i\lambda|^2(z)\leq \tau\lambda_{tt}(z)$ hold on $U\cap V_j$ for some $t$ ($1\leq t\leq n-1$), with $\tau>0$ independent of $M$.
\end{defn}
\begin{rem}
(1)  The neighborhood $U$ can be dependent on $M$ and $\lambda$ also depends on $M$.  The constant $\tau$ in the Definition \ref{defntilde} can be made arbitrarily small positive by scaling the function $\lambda$ to $\eta\lambda$.  The condition on $|L_i\lambda|$  essentially means the norm of each component of the gradient of $\lambda$ is uniformly bounded by a certain diagonal entry in the complex Hessian of $\lambda$.

(2)  Lemma 3.4 is implicitly used in the definition of Property $(\widetilde{P}^\#_{n-1})$ (and Property $({P}^\#_{n-1})$).  The arbitrary choice of the index set $I_s=I_{n-2}$ results in the requirement of existence of certain $\lambda_{tt}$ in the definition.
\end{rem}
\begin{thm}\label{tildethm}
Let $\Omega\subset\mathbb{C}^n$ $(n>2)$ be a smooth bounded pseudoconvex domain.  If $b\Omega$ has Property $(\widetilde{P}_{n-1}^\#)$, then the $\dbar$-Neumann operator $N_{n-1}$ is compact on $L^2_{(0,n-1)}(\Omega)$.
\end{thm}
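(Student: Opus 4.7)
The plan is to reduce to the compactness estimate from Proposition~\ref{new_section1_1}(ii) and then exploit Proposition~\ref{tildeprop} piece by piece through a partition of unity. By Remark~\ref{rem_density}(2), it suffices to prove that for every $\epsilon>0$ there is $C_\epsilon>0$ with
\[ \|u\|^2 \leq \epsilon(\|\dbar u\|^2 + \|\dbarad u\|^2) + C_\epsilon \|u\|_{-1}^2 \]
for all $u \in C^\infty_{(0,n-1)}(\bar\Omega)\cap\dom(\dbarad)$. Given $\epsilon>0$, I would fix a finite open cover of $\bar\Omega$ consisting of the boundary charts $\{V_j\}_{j=1}^N$ from Definition~\ref{defntilde} together with an interior set $V_0\Subset\Omega$, and a subordinate partition of unity $\{\zeta_j^2\}_{j=0}^N$. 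The interior piece $\zeta_0 u$ is compactly supported in $\Omega$, and standard interior elliptic regularity of $\square_{n-1}$ yields its compactness estimate directly.

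For each $j\geq 1$, I would invoke Property $(\widetilde{P}^\#_{n-1})$ with a large parameter $M$, to be chosen later in terms of $\epsilon$, to obtain a $C^2$ function $\lambda$ on a neighborhood of $b\Omega\cap V_j$ and an index $t\in\{1,\dots,n-1\}$ with $\lambda_{tt}\geq M$ and $\sum_i|L_i\lambda|^2\leq\tau\lambda_{tt}$, where $\tau$ is arbitrarily small by the scaling remark after Definition~\ref{defntilde}. Applying Proposition~\ref{tildeprop} to $\zeta_j u$ with weight $\phi=\lambda$ and $\gamma=1$, the integrand on the left satisfies
\[ -(\gamma+1)\sum_i|L_i\phi|^2 + 2\phi_{tt} \geq (2-2\tau)\lambda_{tt}\geq M\]
once $\tau<1/2$, producing
\[ M\|(\zeta_j u)_{1,\dots,n-1}\|_{2\phi}^2\leq\|\dbar(\zeta_j u)\|_{2\phi}^2+2\|\dbaradphi(\zeta_j u)\|_{2\phi}^2+C\|\zeta_j u\|_{2\phi}^2+C_\phi\|(\zeta_j u)e^{-\phi}\|_{-1}^2. \]
The normal components of $\zeta_j u$ are controlled via the elliptic regularity argument (as in \eqref{tilde42}) with an arbitrarily small coefficient, and combining with the tangential estimate while choosing $M$ large enough to absorb the $C\|\zeta_j u\|_{2\phi}^2$ term gives
\[ \|\zeta_j u\|_{2\phi}^2\leq\frac{C'}{M}\bigl(\|\dbar(\zeta_j u)\|_{2\phi}^2+\|\dbaradphi(\zeta_j u)\|_{2\phi}^2\bigr)+C_\phi\|(\zeta_j u)e^{-\phi}\|_{-1}^2. \]

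To close the argument I would commute $\zeta_j$ past $\dbar$ and $\dbaradphi$, with the commutators $[\dbar,\zeta_j]u$ and $[\dbaradphi,\zeta_j]u$ appearing as zeroth-order operators bounded in $L^2$ by $C\|u\|^2$; convert $\dbaradphi$ to $\dbarad$ (they differ by an order-zero operator with norm controlled by $\|d\phi\|_\infty$); pass from the weighted to the unweighted $L^2$ norm, which is a bounded equivalence once $\phi$ is a fixed element of $C^2(\bar\Omega)$; and use $\|ue^{-\phi}\|_{-1}\leq C_\phi\|u\|_{-1}$ since $e^{-\phi}$ is a smooth bounded multiplier. Summing over $j$ and choosing $M$ sufficiently large (depending on $\epsilon$) so that both $C'/M$ and the accumulated zeroth-order error terms fall within $\epsilon$, one obtains the desired compactness estimate.

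The main obstacle is the bookkeeping of constants. The weight $\phi=\lambda$ depends on $M$ and hence on $\epsilon$, so every weighted-to-unweighted conversion constant carries a hidden $\epsilon$-dependence. The argument is self-consistent only because the coefficient $C'/M$ produced by Proposition~\ref{tildeprop} is independent of $\phi$ and vanishes as $M\to\infty$, while all remaining $\phi$-dependent constants are gathered into $C_\epsilon$. Crucially, the hypothesis $\sum_i|L_i\lambda|^2\leq\tau\lambda_{tt}$ of Definition~\ref{defntilde} is what prevents the gradient-squared terms on the left of \eqref{tildep_1} from swallowing the Hessian gain; without it, the coefficient of $|u_{1,\dots,n-1}|^2$ could not be made uniformly large.
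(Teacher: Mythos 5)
Your reduction to the compactness estimate, the partition of unity, and the use of Proposition~\ref{tildeprop} with the self-bounded gradient hypothesis are all in the right spirit, but the step ``pass from the weighted to the unweighted $L^2$ norm, which is a bounded equivalence once $\phi$ is a fixed element of $C^2(\bar\Omega)$'' is where the argument breaks. The conversion constants are $\sup_\Omega e^{-2\lambda}$ and $\inf_\Omega e^{-2\lambda}$, and their ratio multiplies not only the $\|u\|_{-1}^2$ error but also the gain term $\tfrac{C'}{M}$: starting from
\[
\|e^{-\lambda}u\|_0^2 \leq \tfrac{C'}{M}\bigl(\|e^{-\lambda}\dbar u\|_0^2+\|\dbarad(e^{-\lambda}u)\|_0^2\bigr)+C_M\|e^{-\lambda}u\|_{-1}^2,
\]
one obtains, after dividing by $\inf e^{-2\lambda}$, a coefficient $\tfrac{C'}{M}\cdot\tfrac{\sup e^{-2\lambda}}{\inf e^{-2\lambda}}$ on the right, and since $\lambda$ depends on $M$ and Property $(\widetilde{P}_{n-1}^{\#})$ imposes no uniform $L^\infty$ bound on $\lambda$ (the boundedness in Property $(P_q)$ is precisely what is dropped in the $\widetilde{P}$-type conditions), this ratio is uncontrolled as $M\to\infty$. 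The same difficulty afflicts the conversion of $\dbarad(e^{-\lambda}u)$ into $\dbarad u$, since the difference is a zeroth-order operator whose norm involves $\|\nabla\lambda\|_\infty$, again unbounded in $M$. Your paragraph on ``bookkeeping of constants'' asserts that these issues are benign, but they are exactly what would defeat the naive argument.

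This is the central subtlety in McNeal's Property $(\widetilde{P}_q)$ theory and it requires an additional idea. The paper's proof addresses it by never leaving the weighted picture: it establishes the estimate
\[
\|e^{-\lambda}u\|_0^2 \leq \tfrac{C_2}{M}\|\dbarad(e^{-\lambda}u)\|_0^2 + C_M\|e^{-\lambda}u\|_{-1}^2
\]
for $u\in\ker(\dbar)\cap\dom(\dbarad)$, observes that this is a compactness estimate for $(\dbarad N_{n-1})^*$ on the \emph{moved} space $e^{-\lambda}\ker(\dbar)$ rather than on $\ker(\dbar)$, and then follows Straube's device of inserting the weighted Bergman projection $P_{n-1,\lambda}$ to correct the movement, yielding \eqref{tildep11} and finally invoking Lemma~\ref{lionlemma}. (McNeal's alternative is to analyze the dual space $\{e^{-\phi}\dbaradphi u\}$ and use minimality of the canonical solution.) Without one of these mechanisms, the proof does not close.
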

\begin{proof}
By Proposition \ref{new_section1_1}, it suffices to establish compactness of $\dbarad N_{n-1}$ and $\dbarad N_{n}$.  Since the $\dbar$-Neumann problem on $(0,n)$-forms in $\mathbb{C}^n$ degenerates to an elliptic partial differential equation with Dirichlet boundary condition,  $N_n$ always gains $2$ derivatives.  Hence $\dbarad N_{n}$ gains $1$ derivative and is compact.  Compactness of $\dbarad N_{n-1}$ is equivalent to compactness of its adjoint $(\dbarad N_{n-1})^*$, hence we only need to prove  compactness of $(\dbarad N_{n-1})^*$.  This operator has the advantage that it is identically zero on $\ker(\dbar)$ and we only need to prove  compactness on $\ker(\dbar)^\bot$.

Given any $M>0$, on each $V_j$, we take $\gamma=\frac{1}{\tau}-n-3$ (note that $\tau$ can be arbitrarily small) and $\phi=\lambda$, where we can assume $\lambda$ is smooth on all of $U\cup \Omega$ (shrinking $U$ if necessary).  Apply the estimate \eqref{tilde5} together with Definition \ref{defntilde}, then we have:
\begin{equation}\label{tilde6}
\int_\Omega \lambda_{tt}|u_{1,\cdots, n-1}|^2 e^{-2\lambda} dV\leq ||\dbar u||^2_{2\lambda}+||\dbaradl u||^2_{2\lambda}+C_\tau ||u||^2_{2\lambda}+C_{\lambda,\tau}||ue^{-\lambda}||^2_{-1},
\end{equation}
for any $u\in C^\infty_{(0,n-1)}(\bar{\Omega})\cap\dom(\dbarad)$ with support in $V_j \cap U$.  

By pairing with compact supported forms, we have:
\begin{flalign}
&\dbaradl u=\dbarad u+\sumprime_{|K|=n-2}\big(\sum_{j=1}^n (L_j\lambda) u_{jK}\big)\overline{\omega}_K,\nonumber\\
& \dbarad (e^{-\lambda}u)=e^{-\lambda}\dbarad u+e^{-\lambda} \sumprime_{|K|=n-2}\big(\sum_{j=1}^n (L_j\lambda) u_{jK}\big)\overline{\omega}_K,\nonumber\\
& e^{-\lambda}\dbaradl u=\dbarad(e^{-\lambda}u).\label{v2_dbarad_pqt}
\end{flalign}
Now take squared $L^2$-norms and use $e^{-\lambda}\dbaradl u=\dbarad(e^{-\lambda}u)$ from above, then we have:
\begin{flalign}\label{tildep8}
& \int_\Omega \lambda_{tt}|u_{1,\cdots, n-1}|^2 e^{-2\lambda} dV\nonumber\\
& \leq ||e^{-\lambda}\dbar u||^2_{0}+||\dbarad(e^{-\lambda} u)||^2_{0}+C_\tau ||e^{-\lambda}u||^2_{0}+C_{\lambda,\tau}||ue^{-\lambda}||^2_{-1}.
\end{flalign}
Apply the Definition \ref{defntilde}, adding \eqref{tildep8} and the $L^2$ estimate for the normal part, moving $M$ to the right side and hence we have for any $u\in C^\infty_{(0,n-1)}(\bar{\Omega})\cap\dom(\dbarad)$ with support in $V_j \cap U$:
\begin{equation}\label{tildep9}
||e^{-\lambda}u||^2_0\leq \frac{C_2}{M}\bigl( ||e^{-\lambda}\dbar u||^2_0+ ||\dbarad(e^{-\lambda} u)||^2_0 \bigr) +C_M ||e^{-\lambda}u||^2_{-1}.
\end{equation}
We remark here that by first varying $\tau$ sufficiently small and then varying $M$ arbitrarily big, $\frac{C_2}{M}$ is arbitrary small positive.  

Now for any  $u\in C^\infty_{(0,n-1)}(\bar{\Omega})\cap\dom(\dbarad)$ without any assumption on support, we use partition of unity and elliptic regularity arguments.  Adding \eqref{tildep9} over all partitions, we conclude that the estimate \eqref{tildep9} holds true for any $u\in C^\infty_{(0,n-1)}(\bar{\Omega})\cap\dom(\dbarad)$.  The $C^\infty$ smoothness assumption in \eqref{tildep9} can also be replaced by $u\in \dom(\dbar)\cap\dom(\dbarad)$ (see the remark after Proposition \ref{new_section1_1}).  Therefore we have for any $(0,n-1)$-form $u\in \ker(\dbar)\cap\dom(\dbarad)$,
\begin{flalign}\label{tildep10}
||e^{-\lambda}u||^2_0 & \leq \frac{C_2}{M}\bigl( ||e^{-\lambda}\dbar u||^2_0+ ||\dbarad(e^{-\lambda} u)||^2_0 \bigr) +C_M ||e^{-\lambda}u||^2_{-1}\nonumber\\
&=\frac{C_2}{M}||\dbarad(e^{-\lambda} u)||^2_0+ C_M ||e^{-\lambda}u||^2_{-1}.
\end{flalign}  
\eqref{tildep10} is the compactness estimate for $(\dbarad N_{n-1})^*$ on $e^{-\lambda}\ker(\dbar)$.  Since we need to prove the compactness estimate for the same operator on $\ker(\dbar)$, it is necessary to argue further to overcome the ``movement" of the space.

There are two methods at this point, one is to follow along McNeal's argument in \cite{mcneal2} where the author's idea is to analyse on the space of $\lbrace e^{-\phi}\dbaradphi u \rbrace$, locate one solution to the $\dbar$-problem in the dual space and use the minimal $L^2$-norm property of the canonical solution to obtain the desired compactness estimate on $\ker(\dbar)$. Applying such idea in our case requires additional treatment on the $||e^{-\lambda}u||_{-1}$ term in \eqref{tildep9}.  The second method is to follow along Straube's argument (see section 4.10, \cite{stra1}) where the idea is to use a weighted Bergman projection $P_{n-1,\lambda}$ and correct the movement of $\ker(\dbar)$ in the compactness estimate.  We choose to follow the second method since the $||e^{-\lambda}u||_{-1}$ term is benign under such an argument.  We point out key steps for reader's convenience, and provide the additional steps.

Define the Bergman projection $P_{n-1}$ as the orthogonal projection from $L^2_{(0,n-1)}(\Omega)$ to $\ker(\dbar)$ under the $L^2$ inner product.  Define the weighted Bergman projection $P_{n-1,\lambda}$  as the orthogonal projection from $L^2_{(0,n-1)}(\Omega)$ to $\ker(\dbar)$ under the weighted inner product $(\cdot,\cdot)_\lambda$.  Given any $v\in\ker(\dbar)\cap\dom(\dbarad)$, Straube's argument (see the argument along formula (4.82) to (4.85) in section 4.10, \cite{stra1}) shows that for the above $v$,
\begin{equation}\label{tildep11}
||v||^2_0\leq \frac{C_0}{M}||\dbarad v||^2_0+C_M ||e^{-\lambda}\cdot P_{n-1,\lambda}(e^{\lambda}v)||^2_{-1}.
\end{equation}

Now we apply \lemref{lionlemma} to prove the operator $(\dbarad N_{n-1})^*$ restricted to $L^2(\Omega)\cap\ker(\dbar)^\bot$, is compact.  Take $||\cdot||_X$ and $||\cdot||_Y$ be the $L^2$-norms, and $||\cdot||_{Z_\epsilon}=||\cdot||_{-1}$ be the Sobolev $W^{-1}$-norm for any $\epsilon$.  Define an operator $R:v\mapsto e^{-\lambda}\cdot P_{n-1,\lambda}(e^{\lambda}v)$.  By Rellich lemma applying to the trivial inclusion $j:L^2(\Omega)\to W^{-1}(\Omega)$, we know that $R$ is compact from $L^2_{(0,n-1)}(\Omega)$ to $W^{-1}_{(0,n-1)}(\Omega)$.  Now take  $S_\epsilon=R\circ \dbar N_{n-2}$ for any $\epsilon$, then $S_\epsilon$ is compact from $L^2(\Omega)$ to $W^{-1}(\Omega)$ on the respective level of forms.  Note that $\dbar N_{n-2}$ is the canonical solution operator to the $\dbarad$-problem, which is continuous in $L^2$-norm (see \cite{hor1}).

Now for any $x\in L^2_{(0,n-2)}(\Omega)\cap \ker(\dbar)^\bot$, let $v=\dbar N_{n-2}x\in \ker\dbar\cap L^2_{(0,n-1)}(\Omega)$.  Use the estimate (\ref{tildep11}), together with the fact that $\dbarad v=x$, we have for any $\epsilon>0$:
\[
|| (\dbarad N_{n-1})^* x||_0^2\leq \epsilon ||x||_0^2+C_\epsilon || S_\epsilon x||_{-1}^2.
\]
In viewing \lemref{lionlemma}, the operator $(\dbarad N_{n-1})^*$ restricted to $\ker(\dbar)^\bot$, is compact.  Notice that $(\dbarad N_{n-1})^*$ is zero on $\ker(\dbar)$, the proof is complete.
\end{proof}

\section{The variant of Property $(P_{n-1})$}\label{v2_sec5}
In this section, we study a variant of Property $(P_{n-1})$ on smooth pseudoconvex domains in $\mathbb{C}^n$, which implies  compactness of $N_{n-1}$ on $L^2_{(0,n-1)}(\Omega)$.  We first derive an estimate for the tangential part of $(0,n-1)$-forms supported near the boundary, the desired compactness estimate for $N_{n-1}$ will then follow.

The start point is to take $g=1$, $\epsilon=\gamma=\frac{1}{4}$ in the unified estimate \eqref{basic_es:1}, and the resulting estimate coincides with the following $q$-pseudoconvex Ahn-Zampieri estimate.  

\begin{prop}[\cite{ahn}, \cite{zam}]\label{prop43}
Under the assumption of Corollary \ref{cor_of_unified_es}, we have the following estimate for every integer $q$, $s$ with $1\leq q\leq n-1$ and $1\leq s\leq n-1$:
\begin{eqnarray}\label{ahnzam1}
& & C(||\dbar u||_\varphi^2+||\dbarad_\varphi u||_\varphi^2)+C||u||_\varphi^2 \\ 
& \geq & \sideset{}{'}\sum_{|K|=q-1}\ \sum_{j,k=1}^n\int_\Omega\varphi_{jk} u_{jK} \overline{u_{kK}}e^{-\varphi}~dV-\sideset{}{'}\sum_{|J|=q}\ \sum_{j\in I_s}\int_\Omega\varphi_{jj}|u_J|^2e^{-\varphi}~dV \nonumber\\ 
&& +  \sideset{}{'}\sum_{|K|=q-1}\ \sum_{j,k=1}^n\int_{b\Omega}r_{jk} u_{jK} \overline{u_{kK}}e^{-\varphi}~d\sigma-\sideset{}{'}\sum_{|J|=q}\ \sum_{j\in I_s}\int_{b\Omega}r_{jj}|u_J|^2e^{-\varphi}~d\sigma.\nonumber
\end{eqnarray}
$u=\sum_J' u_J \overline{\omega}_J$ $\in C^\infty_{(0,q)}(\overline{\Omega})\cap\dom(\dbarad)$ with $\supp (u)\in \overline{\Omega}\cap U$, and  $I_s=\lbrace j_k, 1\leq k\leq s\rbrace\subset\lbrace 1,\cdots,n-1\rbrace$ is any ordered index set.
\end{prop}

\begin{prop}\label{prop42}
Let $\Omega$ be a smooth pseudoconvex domain. Suppose that $u\in C^\infty_{(0,n-1)}(\overline{\Omega})\cap~\dom(\dbarad)$ with $\supp(u)\in\overline{\Omega}\cap U$ and $\varphi\in C^2(\overline{\Omega})$.  We have the following estimate:
\begin{eqnarray}
&&\int_\Omega\varphi_{tt}|u_{1,2,\cdots,n-1}|^2e^{-\varphi}~dV\\
&&\leq C (||\dbar u||_\varphi^2+||\dbarad_\varphi u||_\varphi^2+||u||_\varphi^2)+C_\varphi||e^{-\frac{\varphi}{2}}u||^2_{-1},~~\forall 1\leq t\leq n-1.\nonumber
\end{eqnarray}

\end{prop}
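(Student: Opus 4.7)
\emph{Strategy and setup.} The plan is to invoke Proposition~\ref{prop41} in a suitably relabeled special boundary chart with $s=n-2$, telescope the tangential contribution on the right-hand side to isolate exactly $\varphi_{tt}|u_{1,\ldots,n-1}|^2$, use pseudoconvexity to drop the tangential boundary integral, and estimate the remaining normal coefficients via the same interior elliptic-regularity argument already carried out for Proposition~\ref{tildeprop}. Given $t\in\{1,\ldots,n-1\}$, I reorder the orthonormal frame of complex tangent vector fields so that in the new labeling the $(n-1)$-th vector equals $L_t$; this yields another valid special boundary chart, in which the entry $\varphi_{n-1,n-1}$ (respectively $\rho_{n-1,n-1}$) coincides with $\varphi_{tt}$ (respectively $\rho_{tt}$) of the original chart and $u_{1,\ldots,n-1}$ is unchanged up to sign.

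\emph{Tangential telescoping.} Apply Proposition~\ref{prop41} with $s=n-2$ in this new chart. For a $(0,n-1)$-form the only tangential multi-index is $J_0=(1,\ldots,n-1)$. A direct index count shows that the tangential part of $\sumprime_{|K|=n-2}\sum_{j,k=1}^n\varphi_{jk}u_{jK}\overline{u_{kK}}$ equals $\sum_{i=1}^{n-1}\varphi_{ii}|u_{J_0}|^2$: the only $K$ of size $n-2$ with $n\notin K$ is the complement of some $i^\ast\in\{1,\ldots,n-1\}$, and tangentiality forces $j=k=i^\ast$. On the other hand, the tangential part of $-\sumprime_{|J|=n-1}\sum_{j\le n-2}\varphi_{jj}|u_J|^2$ is $-\sum_{j=1}^{n-2}\varphi_{jj}|u_{J_0}|^2$, so the two combine to $\varphi_{n-1,n-1}|u_{J_0}|^2=\varphi_{tt}|u_{1,\ldots,n-1}|^2$. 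The identical telescoping on the boundary integrals gives $\rho_{n-1,n-1}|u_{J_0}|^2=\rho_{tt}|u_{1,\ldots,n-1}|^2$, which is nonnegative on $b\Omega$ by pseudoconvexity and so may be dropped from the right-hand side.

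\emph{Normal coefficients and conclusion.} All remaining terms on the right-hand side of \eqref{ahnzam} involve at least one coefficient $u_J$ with $n\in J$; the associated boundary integrals vanish since $u_{nK}=0$ on $b\Omega$. For the volume terms, I use that $\dbar\vartheta_\varphi+\vartheta_\varphi\dbar$ acts coefficientwise as $-\frac{1}{4}\Delta$ plus lower-order operators; together with interior Sobolev $W^1$-estimates for $\Delta$ and the Dirichlet condition $u_{\textrm{Norm}}|_{b\Omega}=0$, this bounds $||u_{\textrm{Norm}}||_1^2$ by $C_\varphi(||\dbar u||_\varphi^2+||\dbaradp u||_\varphi^2+||u||_\varphi^2)$. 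Interpolating between $W^1$ and $W^{-1}$ exactly as in the passage from \eqref{tilde5555} to \eqref{tilde42} yields, for every $\epsilon>0$,
\[
||u_{\textrm{Norm}}||_0^2\le\epsilon\bigl(||\dbar u||_\varphi^2+||\dbaradp u||_\varphi^2+||u||_\varphi^2\bigr)+C_{\epsilon,\varphi}||e^{-\varphi/2}u||_{-1}^2;
\]
the mixed cross-terms $\varphi_{ij}u_{iK}\overline{u_{jK}}$ in which exactly one factor is normal reduce to this same estimate by Cauchy--Schwarz (splitting into $\epsilon|u_{J_0}|^2$, to be absorbed into $||u||_\varphi^2$, and $C_\epsilon|u_{\textrm{Norm}}|^2$), and absorbing the $\epsilon$-contributions into the left completes the proof. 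The only genuine subtlety is the bookkeeping in the interpolation step, ensuring that $C_{\epsilon,\varphi}$ has the stated dependence on $\varphi$; beyond that, no new idea is required.
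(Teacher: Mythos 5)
Your proof is correct and follows essentially the same route as the paper: apply Proposition~\ref{prop41} with $s=n-2$, telescope the tangential sums to isolate $\varphi_{n-1,n-1}|u_{1,\ldots,n-1}|^2$ (and the matching boundary integral, which is nonnegative by pseudoconvexity and hence dropped), and control the remaining normal coefficients by the elliptic regularity plus Sobolev interpolation argument from Proposition~\ref{tildeprop}. The only cosmetic difference is that you relabel the chart at the outset so that $L_t$ sits in the $(n-1)$-st slot, whereas the paper first proves the case $t=n-1$ and then appeals to a permutation of the tangential frame.
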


\begin{proof}
The framework of proof is similar to that in Proposition \ref{tildeprop}.  We make use of the estimate in Proposition \ref{prop43} in our proof.  Take $s=n-2$ and $I_s=\lbrace 1,\cdots,n-1\rbrace\backslash\lbrace t \rbrace$ in Proposition \ref{prop43}.

We start with the last two terms in the estimate (\ref{ahnzam1}) and apply the condition that $u_{nK}=0$ on $b\Omega$ (since $u\in\textrm{dom}(\dbarad)$), hence the last line in the estimate (\ref{ahnzam1}) becomes:
\begin{eqnarray}\label{eqn3_1}
&&\sideset{}{'}\sum_{|K|=n-2}\ \sum_{j,k=1}^n\int_{b\Omega}r_{jk} u_{jK} \overline{u_{kK}}e^{-\varphi}~d\sigma-\sideset{}{'}\sum_{|J|=n-1}\ \sum_{j\in I_s}\int_{b\Omega}r_{jj}|u_J|^2e^{-\varphi}~d\sigma \nonumber\\
&=&\int_{b\Omega}r_{tt}|u_{1,2,\cdots,n-1}|^2e^{-\varphi}~d\sigma\geq 0.
\end{eqnarray}
Notice that $r_{tt}\geq 0$ on $b\Omega$ for all $t\leq n-1$ by pseudoconvexity of $\Omega$.

To estimate the second line in the estimate (\ref{ahnzam1}), we first take the two sums running over the indices of the tangential part of $u$:
\begin{eqnarray}\label{41}
&&\sideset{}{'}\sum_{|\widetilde{K}|=n-2}\ \sum_{j,k=1}^{n-1}\int_\Omega\varphi_{jk} u_{j\widetilde{K}} \overline{u_{k\widetilde{K}}}e^{-\varphi}~dV-\sideset{}{'}\sum_{|\widetilde{J}|=n-1}\ \sum_{j\in I_s}\int_\Omega\varphi_{jj}|u_{\widetilde{J}}|^2 e^{-\varphi}~dV \nonumber\\
&&=\int_{\Omega}\varphi_{tt}|u_{1,2,\cdots,n-1}|^2e^{-\varphi}~dV,
\end{eqnarray}
where $\widetilde{K}$ is the set of $(n-2)$-tuples of $K$ which do not contain $n$ and $\widetilde{J}$ is the set of $(n-1)$-tuples of $J$ which do not contain $n$.  Then we apply Cauchy inequality to all terms containing normal parts of $u$ and use  ellipticity arguments to handle the estimate.   The proof is now complete.
\end{proof}

With the proof of Proposition \ref{prop42}, it is quite clear to see how we formulate the variant of Property $(P_{n-1})$.  We have the following definition:
\begin{defn}
For a smooth bounded pseudoconvex domain $\Omega\subset\mathbb{C}^n$ $(n>2)$, $b\Omega$ has Property $(P_{n-1}^\#)$ if there exists a finite cover $\lbrace V_j\rbrace_{j=1}^N$ of $b\Omega$ with special boundary charts  and the following holds on each $V_j$:  for any $M>0$, there exists a neighborhood $U$ of $b\Omega$ and a $C^2$ smooth function $\lambda$ on $U\cap V_j$, such that $0\leq\lambda(z)\leq 1$ and there exists $t$ ($1\leq t\leq n-1$) such that $\lambda_{tt}\geq M$ on $U\cap V_j$.
\end{defn}
\begin{rem}
(1)  The formulation of Property $(P_{n-1}^\#)$ depends on the choice of $s$ in Proposition \ref{prop43} which implies that the third line in \eqref{ahnzam1} is non-negative.  We note that for $(0,n-1)$-forms, it is still valid to choose any $1\leq s\leq n-2$ and carry out a parallel argument. This should result in different variants of Property $(P_{n-1})$ that still imply  compactness of $N_{n-1}$.  

(2) In contrast to Property $(P_1)$, the Hessian matrix in the definition of Property $(P_q)$ for $q>1$ are not required to be positive definite, hence this allows additional flexibility on each individual eigenvalue and each diagonal entry  of the  Hessian to be positive, negative or zero.  We refer the reader to \cite{zhang} and \cite{zhang1} for the above phenomenon regarding Property $\pq$ for $q>1$.  The above observation still occurs in the second line of \eqref{ahnzam1} in Proposition \ref{prop43}:  the term $\displaystyle\sideset{}{'}\sum_{|K|=q-1}\ \sum_{j,k=1}^n\varphi_{jk} u_{jK} \overline{u_{kK}}$ characterizes the sum of any $q$ eigenvalues of the matrix $(\varphi_{jk})$ (applying Lemma \ref{0lemma7}), and each diagonal entry $\varphi_{jj}$ can be positive, negative or zero.
\end{rem}
Now we prove the main theorem in this section:
\begin{thm}\label{chap6thm}
Let $\Omega\subset\mathbb{C}^n$ $(n>2)$ be a smooth bounded pseudoconvex domain.  If $b\Omega$ has Property $(P_{n-1}^\#)$, then the $\dbar$-Neumann operator $N_{n-1}$ is compact on $L^2_{(0,n-1)}(\Omega)$.
\end{thm}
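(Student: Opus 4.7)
The plan is to mirror the proof of Theorem~\ref{tildethm}, but using Proposition~\ref{prop42} in place of Proposition~\ref{tildeprop} and exploiting that the weight function in $(P_{n-1}^\#)$ satisfies the two-sided bound $0\le\lambda\le 1$. As in the earlier proof, I would first invoke Proposition~\ref{new_section1_1}(iii) to reduce the task to compactness of the canonical solution operators $\dbarad N_{n-1}$ and $\dbarad N_n$. The operator $N_n$ is automatically compact on any smooth bounded domain since its defining system is elliptic with Dirichlet boundary data, so $\dbarad N_n$ is compact as well. Hence I would focus on $\dbarad N_{n-1}$, equivalently on its adjoint $(\dbarad N_{n-1})^*$, which is identically zero on $\ker(\dbar)$; only compactness on $\ker(\dbar)^\bot$ remains to be proved.

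For each patch $V_j$ in the cover from Property $(P_{n-1}^\#)$ and any $M>0$, I would select the accompanying function $\lambda$ satisfying $0\le\lambda\le 1$ and $\lambda_{tt}\ge M$ on $V_j\cap U$ for some $t=t(j)$, and then apply Proposition~\ref{prop42} with $\varphi=\lambda$. Since $\lambda$ is bounded, the weighted and unweighted $L^2$-norms are comparable with universal constants. Using the identity $e^{-\lambda}\dbaradl u = \dbarad(e^{-\lambda} u)$ to eliminate the weighted adjoint, and importing the elliptic $L^2$-estimate for the normal components of $u$ exactly as in \eqref{tilde5555}--\eqref{tilde42}, I would arrive, for $u\in C^\infty_{(0,n-1)}(\bar\Omega)\cap\dom(\dbarad)$ supported in $V_j\cap U$, at a local compactness estimate
\[
||e^{-\lambda}u||_0^2 \;\le\; \frac{C_2}{M}\bigl(||e^{-\lambda}\dbar u||_0^2 + ||\dbarad(e^{-\lambda}u)||_0^2\bigr) + C_M\,||e^{-\lambda}u||_{-1}^2,
\]
in which $C_2$ is independent of $M$.

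From here the argument is verbatim the latter half of the proof of Theorem~\ref{tildethm}: a partition of unity (with cut-off error terms handled by interior elliptic regularity) globalizes the estimate to all $u\in\dom(\dbar)\cap\dom(\dbarad)$; the weighted Bergman projection $P_{n-1,\lambda}$ in the manner of \cite{stra1}, Section~4.10, corrects the movement of $\ker(\dbar)$; and \lemref{lionlemma} combined with the Rellich embedding completes the argument, yielding compactness of $(\dbarad N_{n-1})^*$ on $\ker(\dbar)^\bot$. The only technical point needing verification is the $M$-independence of $C_2$ in the displayed estimate, which traces back to checking that the constants arising from Proposition~\ref{prop42} and from the normal-part elliptic estimate depend only on fixed geometric quantities and on the uniform bound $0\le\lambda\le 1$, not on derivatives of $\lambda$; the higher-derivative dependence is collected harmlessly into the $C_M$ term. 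Overall the proof is a strict simplification of Theorem~\ref{tildethm}, so I do not expect any fundamentally new obstacles.
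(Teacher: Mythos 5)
Your proposal is essentially correct, but it follows a genuinely different route from the paper's own proof of this theorem. The paper exploits the bound $0\le\lambda\le 1$ to work entirely in unweighted norms: it verifies the compactness estimate of Proposition \ref{new_section1_1}(ii) directly, by extending $\lambda_M$ to a function in $C^2(\bar\Omega)$ with values in $[0,1]$, applying Proposition \ref{prop42} with $\varphi=\lambda_M$, noting that the weighted and unweighted norms are uniformly comparable, estimating the normal components by the elliptic argument, absorbing $\tfrac{C}{M}\|u\|^2$, and globalizing with a partition of unity and interior elliptic regularity --- no reduction to canonical solution operators, no adjoint $(\dbarad N_{n-1})^*$, and no weighted Bergman projection appear. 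Your proposal instead transplants the heavier machinery of \thmref{tildethm} (reduction to $(\dbarad N_{n-1})^*$ on $\ker(\dbar)^\bot$, the identity $e^{-\lambda}\dbaradl u=\dbarad(e^{-\lambda}u)$, Straube's correction of the movement of $\ker(\dbar)$ via $P_{n-1,\lambda}$, and Lemma \ref{lionlemma}). This works, and it even has an advantage: by keeping $\dbarad(e^{-\lambda}u)$ you never compare $\dbaradl$ with $\dbarad$, whose difference is a zeroth-order term involving $\nabla\lambda$ that is not bounded uniformly in $M$; the paper's quick replacement of $\|\dbaradp u\|_\varphi$ by $\|\dbarad u\|$ in the proof of \thmref{chap6thm} glosses over precisely this point, so your route buys extra safety at the cost of extra machinery. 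Your identification of the $M$-independence of $C_2$ as the point to check is also right, since the constant $C$ in Proposition \ref{prop42} is independent of $\varphi$ and the weight-comparison constants depend only on $0\le\lambda\le 1$. One caveat you inherit from \thmref{tildethm}: the function $\lambda$ (and the index $t$) vary with the chart $V_j$, so after the partition of unity your final step invoking \emph{the} weighted Bergman projection $P_{n-1,\lambda}$ requires specifying a single global weight for each $M$ (or arguing patchwise); in the paper's direct route this issue disappears because every local estimate is converted to unweighted norms before summation, so if you keep your route you should add a word of justification there.
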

\begin{proof}
Fix $M>0$, by Proposition \ref{new_section1_1} we need to prove the following compactness estimate for $(0,n-1)$ forms $u\in\textrm{dom}(\dbar)\cap\textrm{dom}(\dbarad)$:
\begin{equation}\label{44}
||u||^2\leq \frac{C}{M}(||\dbar u||^2+||\dbarad u||^2)+C_M ||u||^2_{-1}.
\end{equation}

It suffices to establish (\ref{44}) for $u\in C^\infty_{(0,n-1)}(\overline{\Omega})\cap \dom(\dbarad)$ supported near the boundary by using the density of these forms in $\dom(\dbar)\cap\textrm{dom}(\dbarad)$ and partition of unity.  

Since $b\Omega$ has Property $(P_{n-1}^\#)$, on each special boundary chart $V_j$, there exists an open neighborhood $U_M$ of $b\Omega$  and a $C^2$ smooth function $\lambda_M$ on $U_M\cap V_j$ such that $0\leq\lambda_M\leq 1$ and $\exists t$ $(1\leq t\leq n-1)$ such that $\lambda_{M_{tt}}\geq M$ on $U_M\cap V_j$.   By choosing a function $\eta$ in $C^2(\overline{\Omega})$ which agrees near $U_M\cap V_j$ with $\lambda_M$ and $0\leq \eta \leq 1$ on $\overline{\Omega}$, we can further assume $\lambda_M\in C^2(\overline{\Omega})$ and $0\leq \lambda_M\leq 1$.

Now assume that $u$ is supported near the boundary and by a partition of unity, we may assume that $u$ is supported in $V_j\cap U_M$ for some $j$.
We apply Proposition \ref{prop42} with $\varphi=\lambda_M$ and notice that the weighted norm is comparable to the usual unweighted $L^2$-norm since $0\leq \lambda_M\leq 1$, hence we have:
\begin{equation}\label{45}
\int_\Omega |u_{1,2,\cdots,n-1}|^2~dV\leq \frac{C}{M}(||\dbar u||^2+||\dbarad u||^2+||u||^2)+C_M ||u||^2_{-1}.
\end{equation}

Then we only need to estimate the normal part of $u$, but this can be done exactly the same as before by using elliptic regularity arguments. Therefore the estimate (\ref{45}) holds when we replace the left side with normal components of $u$.  Now absorbing the term $\frac{C}{M}||u||^2$ into the left side, we have:
\begin{equation}
||u||^2\leq \frac{C}{M}(||\dbar u||^2+||\dbarad u||^2)+C_M ||u||_{-1}^2.
\end{equation}
Hence the compactness estimate is established.
\end{proof}

As we mentioned in the introduction part, Property $(P_{n-1}^\#)$ is a weaker sufficient condition for compactness of $N_{n-1}$ than Property $(P_{n-1})$.  We have the following proposition.
\begin{prop}\label{v2_prop_equi_sec5}
Let $\Omega\subset\mathbb{C}^n$ $(n>2)$ be a smooth bounded pseudoconvex domain, then $b\Omega$ has Property $(P_{n-1})$ implies that $b\Omega$ has Property $(P_{n-1}^\#)$.  In particular, if $\Omega$ is further assumed to be convex, then $b\Omega$ has Property $(P_{n-1})$ if and only if $b\Omega$ has Property $(P_{n-1}^\#)$.
\begin{proof}
For any $M>0$, let $\lambda$ be the function defined locally in the definition of Property $(P_{n-1})$ (see Definition \ref{defn_pq}).  By using the Schur majorization theorem, $\sum_{t=1}^{n-1} \lambda_{tt}(P)\geq M$ for any $P\in U\cap V_j$. Hence there exists $t$ such that $\lambda_{tt}(P)\geq\frac{M}{n-1}$.  A continuity argument shows that $\lambda_{tt}(z)\geq\frac{M}{2(n-1)}$ on a neighborhood of $P$ in $ U\cap V_j$.  Since $b\Omega$ is a compact subset, a compactness argument shows that $\lambda_{tt}(z)\geq\frac{M}{2(n-1)}$ for all $z\in U\cap V_j$ by shrinking $U$.  Note that the coefficient $\frac{1}{2(n-1)}$ is immaterial because $M$ can be made arbitrarily big. So $b\Omega$ has Property $(P_{n-1}^\#)$ by applying the above argument on each patch $V_j$.

On smooth convex domains, by Fu-Straube's work (\cite{siqi1}), $b\Omega$ has Property $(P_{n-1})$ if and only if $N_{n-1}$ is compact (indeed the conclusion holds true for all levels of forms).  By Theorem \ref{chap6thm}, $b\Omega$ has Property $(P_{n-1})$ if and only if $b\Omega$ has Property $(P_{n-1}^\#)$.
\end{proof}

\end{prop}
\begin{rem}
As we have discussed in \ref{v2_sub1.2} of the introduction part, on the level of individual functions, in general Property $\pq$ does not imply Property $(P_q^\#)$ even we apply the condition to the tangential parts of $(0,q)$-forms for $1<q<n-1$.  A potential theoretic treatment on Property $(P_q^\#)$ should be useful to study the relation of Property $(P_q^\#)$ and Property $\pq$ on smooth convex domains.
\end{rem}

\section{An application of Property $(P_{n-1}^\#)$}\label{v2_sec6}
In this section, we discuss the relation of small set of infinite-type points on the boundary of a pseudoconvex domain and  compactness of the $\dbar$-Neumann operator $N_{n-1}$, by applying Property $(P_{n-1}^\#)$ appropriately. 

This subject  is motivated by the following results of Sibony (\cite{sibony}) and Boas (\cite{boas3}).
\begin{thm}[\cite{boas3},\cite{sibony}]\label{v2_sibony_boas}
Let $\Omega$ be a smooth bounded pseudoconvex domain.  Assume that the set $K$ of weakly pseudoconvex points on the boundary $b\Omega$ has Hausdorff $2$-dimensional measure zero in $\mathbb{C}^n$, then $K$ satisfies Property $(P_1)$ and the $\dbar$-Neumann operator $N_1$ is compact on $L^2_{(0,1)}(\Omega)$.
\end{thm}
The idea here is to project the set $K$ to each $z_j$-plane and the resulting set satisfies Property $(P_1)$ on each complex $1$-dimensional plane.  Hence summing all involved functions in the definition of Property $(P_1)$ will give the desired conclusion.  Boas (\cite{boas3}) has an explicit construction of the function $\lambda$ in the Property $(P_1)$ of Catlin that involved in the proof.  

Due to the lack of biholomorphic invariance on Property  $\pq$ for $q>1$, previous approach can not be generalized to the case $q>1$ and hence $N_q$ is not known to be compact for $q>1$.  Indeed,  summation of functions in Property  $\pq$ for $q>1$ does not work since eigenvalues from each respective complex Hessian interfere the summation of eigenvalues in the whole complex Hessian.

The following result due to Sibony (\cite{sibony}) will be used in our proof:
\begin{prop}[\cite{sibony}]\label{sibony2}
Let $K$ be a compact subset in $\mathbb{C}^n$ ($n\geq 1$) and $K$ has Lebesgue measure zero in $\mathbb{C}^n$.  Then $K$ has Property $(P_n)$ in $\mathbb{C}^n$.
\end{prop}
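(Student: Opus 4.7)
The plan is to construct, for each prescribed $M>0$, a $C^2$ function $\lambda$ on an open neighborhood of $K$ satisfying $0\leq\lambda\leq 1$ and whose complex Hessian has trace at least $M$. Since $q=n$, ``the sum of any $n$ eigenvalues'' is just the trace, namely $\tfrac14\Delta\lambda$, so my quantitative target is a bounded $C^2$ function with $\Delta\lambda\geq 4M$ on a neighborhood of $K$.

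The way to exploit the measure-zero hypothesis is to cover $K$ by an open set of arbitrarily small Lebesgue measure. For $\epsilon>0$ to be chosen later, I would pick an open set $U_\epsilon\supset K$ with $|U_\epsilon|<\epsilon$, then a smaller open neighborhood $V_\epsilon$ of $K$ with $\overline{V_\epsilon}\subset U_\epsilon$, and a cutoff $\chi\in C_c^\infty(U_\epsilon)$ with $\chi\equiv 1$ on $V_\epsilon$ and $0\leq\chi\leq 1$. Let $G$ denote the Newton kernel in $\mathbb{R}^{2n}$, normalized so that $\Delta G=-\delta_0$; for $n\geq 2$ one has $G(x)=c_n|x|^{-(2n-2)}$, and for $n=1$ one takes $G(x)=-\tfrac{1}{2\pi}\log|x|$. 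I would then set
\[
u:=G\ast(4M\chi),\qquad \lambda:=\|u\|_\infty-u,
\]
so that $u\in C^\infty(\mathbb{C}^n)$ (differentiation falls on the smooth compactly supported factor $\chi$), $\Delta\lambda=-\Delta u=4M\chi\geq 4M$ on $V_\epsilon$, and $0\leq\lambda\leq\|u\|_\infty$.

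The hard part, and the only step where the measure-zero hypothesis does real work, is to force $\|u\|_\infty\leq 1$ by a suitable choice of $\epsilon$. Using the Hardy--Littlewood rearrangement inequality,
\[
|u(x)|\leq 4M\int_{U_\epsilon}|G(x-y)|\,dy\leq 4M\int_{B(0,R_\epsilon)}|G(y)|\,dy,
\]
where $R_\epsilon$ is the radius of the Euclidean ball of volume $|U_\epsilon|$. A direct computation in polar coordinates gives $\int_{B(0,R)}|G|=O(R^2)=O(\epsilon^{1/n})$ for $n\geq 2$ and $O(\epsilon|\log\epsilon|)$ for $n=1$. In both cases this quantity tends to $0$ as $\epsilon\to 0$, so one may pick $\epsilon=\epsilon(M)$ small enough that $\|u\|_\infty\leq 1$. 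The resulting $\lambda$ then witnesses Property $(P_n)$ for $K$ on the neighborhood $V_\epsilon$, completing the plan.
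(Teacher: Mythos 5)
The paper states this proposition with a citation to Sibony and does not reproduce a proof, so there is no internal argument to compare against. Your construction via the Newtonian potential is a correct, self-contained proof, and it is the natural potential-theoretic argument one expects behind the statement. The reduction to making $\tfrac14\Delta\lambda\ge M$ is exactly right (for $q=n$ the ``sum of any $n$ eigenvalues'' is the trace of the complex Hessian, which equals $\tfrac14\Delta$), the use of outer regularity to shrink $|U_\epsilon|$ is where the measure-zero hypothesis enters, and the rearrangement bound correctly forces $\|u\|_\infty\to 0$.

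Two small points worth tightening. First, the argument that $0\le\lambda\le 1$ uses $u\ge 0$, which holds because $G\ge 0$ for $n\ge 2$; for $n=1$, $G=-\tfrac{1}{2\pi}\log|\cdot|$ changes sign, and $|G|$ is not radially decreasing, so the bathtub/rearrangement step does not apply verbatim. Both issues disappear because $K$ is compact: one may assume $U_\epsilon$ is contained in a fixed bounded ball, split the kernel into the part with $|x-y|\le 1$ (positive and radially decreasing, so the rearrangement bound applies and is $O(\epsilon|\log\epsilon|)$) and the part with $1<|x-y|\le\mathrm{diam}$, whose contribution is $O(|U_\epsilon|)=O(\epsilon)$; then $|u|$ is still small, and one takes $\lambda=\|u\|_\infty-u$ with $2\|u\|_\infty\le 1$ so that $0\le\lambda\le 1$ holds without assuming $u\ge 0$. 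Second, it is worth stating explicitly that $u\in C^\infty(\mathbb{C}^n)$ because differentiation falls on $\chi\in C^\infty_c$ under the convolution, so $\lambda$ is indeed $C^2$ on the neighborhood $V_\epsilon$ of $K$ as required by Definition~\ref{defn_pq}. With these minor repairs the proof is complete.
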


By applying the Property $(P_{n-1}^\#)$ and \thmref{chap6thm}, we prove the following theorem which generalizes the above result of Sibony and Boas to the case of $q=n-1$:
\begin{thm}\label{mainthm}
Let $\Omega$ be a smooth bounded pseudoconvex domain in $\mathbb{C}^n$.  If the  Hausdorff $(2n-2)$-dimensional measure of   weakly pseudoconvex points of $b\Omega$ is zero, then the $\dbar$-Neumann operator $N_{n-1}$ is compact on $L^2_{(0,n-1)}(\Omega)$ forms.
\end{thm}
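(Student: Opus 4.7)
The plan is to verify that $b\Omega$ satisfies Property $(P_{n-1}^\#)$ and then invoke \thmref{chap6thm}. Write $K\subset b\Omega$ for the compact set of weakly pseudoconvex points; by hypothesis $K$ has vanishing Hausdorff $(2n-2)$-dimensional measure. Away from $K$ the Levi form is uniformly positive, so on boundary charts disjoint from $K$ a bounded strictly plurisubharmonic function supplies the required family trivially, and the substance of the argument concentrates near $K$.

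First I would cover $b\Omega$ by a finite collection of small special boundary charts $V_1,\ldots,V_N$. On each $V_j$ meeting $K$ I introduce holomorphic coordinates $(\zeta_1,\ldots,\zeta_n)$ centred at a base point, with $\partial/\partial\zeta_n$ the complex normal there, and (by shrinking $V_j$) the frame $L_1,\ldots,L_{n-1}$ kept uniformly close to $\partial/\partial\zeta_1,\ldots,\partial/\partial\zeta_{n-1}$. The key move is the linear and hence Lipschitz projection $\pi_j(\zeta):=(\zeta_1,\ldots,\zeta_{n-1})\colon V_j\to\mathbb{C}^{n-1}$. Lipschitz maps do not increase Hausdorff measure, so the image $K'_j:=\pi_j(K\cap V_j)$ has vanishing Hausdorff $(2n-2)$-measure in $\mathbb{R}^{2n-2}\cong\mathbb{C}^{n-1}$; in the ambient top real dimension this coincides with zero Lebesgue measure. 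Applying Proposition~\ref{sibony2} in $\mathbb{C}^{n-1}$ to $\overline{K'_j}$, for every $M>0$ I obtain an open neighbourhood $W_M\supset K'_j$ and $\mu_M\in C^2(W_M)$ with $0\le\mu_M\le 1$ and trace$(\partial\bar\partial\mu_M)\ge M$ on $W_M$.

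Pulling back, $\lambda_M:=\mu_M\circ\pi_j$ on $\pi_j^{-1}(W_M)\cap V_j$ is $C^2$ with $0\le\lambda_M\le 1$; its complex Hessian in $\zeta$-coordinates is block-diagonal with $(n-1)\times(n-1)$ tangential block equal to $\partial\bar\partial\mu_M$ and vanishing last row and column. In the special-boundary-chart frame this translates into $\sum_{t=1}^{n-1}(\lambda_M)_{tt}\ge M-o(1)$ uniformly on $V_j$, the $o(1)$ error coming from the deviation of $L_t$ from $\partial/\partial\zeta_t$ and absorbed by shrinking $V_j$.

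The main obstacle, and the step I expect to require the greatest care, is upgrading this trace bound to the single-diagonal bound $(\lambda_M)_{tt}\ge M$ for one fixed $t$ on each chart, as demanded by the definition of Property $(P_{n-1}^\#)$. I would attack this by refining the cover into finitely many sub-charts $V_{j,t}$ ($1\le t\le n-1$) on which a particular diagonal entry of $\partial\bar\partial\mu_M$ dominates the trace up to the factor $n-1$, and using $(n-1)\lambda_M$ in place of $\lambda_M$ on $V_{j,t}$ so that the defining inequality holds with that choice of $t$; the nontrivial part is arranging this sub-partition independently of $M$, which I expect to achieve by precomposing $\mu_M$ with a unitary change of coordinates on $\mathbb{C}^{n-1}$ aligning the dominant eigendirection with a pre-chosen axis, together with a uniform continuity argument on the Sibony family. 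Should this refinement prove intractable, a fallback is to revisit Proposition~\ref{prop42}, which is valid for every $t$ separately: summing over $t$ converts the trace bound on $\lambda_M$ directly into the compactness estimate behind \thmref{chap6thm}, bypassing the strict letter of Property $(P_{n-1}^\#)$. Either route yields compactness of $N_{n-1}$ on $L^2_{(0,n-1)}(\Omega)$.
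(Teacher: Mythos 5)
Your core strategy coincides with the paper's: project the set $K$ of weakly pseudoconvex points into $\mathbb{C}^{n-1}$ via the tangential projection of a special boundary chart, apply Proposition~\ref{sibony2} there to get a Sibony family $\mu_M$ whose complex Hessian has trace at least $M$, and pull the functions back by the projection. You also put your finger on the genuine difficulty: converting the resulting trace bound $\sum_{t\le n-1}(\lambda_M)_{tt}\ge M$ into the single-$t$ diagonal condition demanded by the literal wording of $(P_{n-1}^\#)$. Your primary device — an $M$-independent sub-cover obtained by unitarily aligning the dominant eigendirection and then invoking uniform continuity across the Sibony family — is, as you suspect, likely a dead end: the functions $\mu_M$ for different $M$ carry no uniform control on eigendirections, so there is no reason a fixed finite sub-cover with a fixed choice of $t$ per piece should serve all $M$ simultaneously. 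The paper resolves this differently, by letting the decomposition depend on $M$: it sets $E^M_j=\pi^Z(K\cap V)\cap\{\eta^M_{jj}\ge M/(n-1)\}$, observes by pigeonhole that $\bigcup_j E^M_j$ covers $\pi^Z(K\cap V)$, and runs the compactness argument on each $\pi^{-1}_Z(E^M_j)\cap V$ separately; the $M$-dependence of the resulting cutoff functions is harmless because their derivatives are absorbed into the $C_M\|u\|_{-1}^2$ term, so one obtains the compactness estimate without literally verifying $(P_{n-1}^\#)$ on a fixed cover. Your fallback — summing the estimate of Proposition~\ref{prop42} over $t=1,\ldots,n-1$ so that the trace bound feeds directly into the compactness estimate with no sub-partition at all — is correct and, frankly, cleaner than both your main route and the paper's $M$-dependent decomposition; I would promote it from fallback to main argument.
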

\begin{proof}

Let $\lbrace \xi_j\rbrace_{j=1}^{n-1}$ be the orthonormal coordinates which span the complex tangent space $Z$ in the special boundary chart at a boundary point $P$.  Let $V$ be a neighborhood of the boundary point $P$, and $K$ be the weakly pseudoconvex points on the boundary $b\Omega$.  Let $\pi^{Z}:\mathbb{C}^n\to\mathbb{C}^{n-1}$ be the projection map from $\mathbb{C}^n$ onto the complex tangent space $Z$ at $P$.

The set $\pi^{Z}(K\cap V)$ has Hausdorff-$(2n-2)$ dimensional measure zero in a copy of $\mathbb{C}^{n-1}$, since any continuous map preserves Hausdorff measure zero set.  Because Hausdorff-$(2n-2)$ dimensional measure is equivalent to Lebesgue measure in $\mathbb{C}^{n-1}$ (modulo a constant), by Proposition \ref{sibony2}, the set $\pi^{Z}(K\cap V)$ has Property $(P_{n-1})$ of Catlin.  Therefore, for any $M>0$, there exists a neighborhood in $\mathbb{C}^{n-1}$ of $\pi^{Z}(K\cap V)$ and a $C^2$ smooth function $\lambda^M(\xi_1,\cdots,\xi_{n-1})$ such that $0\leq \lambda^M\leq 1$ and the real Laplacian $\Delta \lambda^M(\xi_1,\cdots,\xi_{n-1})\geq M$ on the above neighborhood of $\pi^{Z}(K\cap V)$.  Here the Laplacian is taken with respect to the coordinates $(\xi_1,\cdots,\xi_{n-1})$ in $\mathbb{C}^{n-1}$.  Then $\Delta \lambda^M(\xi_1,\cdots,\xi_{n-1})=\sum_{j=1}^{n-1}\lambda^M_{jj}$ by using the invariance of real Laplacian under orthonormal change of coordinates.  

On the neighborhood $V$, define the trivial extension function:
\[\eta^M(\xi_1,\xi_2,\cdots,\xi_n)=\lambda^M(\xi_1,\cdots,\xi_{n-1}),\]
where $\xi_n$ is the coordinate in the complex normal direction. Then the real Laplacian $\Delta\eta^M$ on the boundary is equal to the real Laplacian $\Delta\lambda^M$.  Consider the entries in the complex Hessian of $(\eta^M_{jk})$, the size of this matrix is $n\times n$.  For $1\leq j\leq n-1$, $\eta^M_{jj}=\lambda^M_{jj}$ by direct verification.

Now let the set $E^M_{j}=\pi^{Z}(K\cap V) \cap \lbrace \eta^M_{jj}\geq \frac{M}{n-1} \rbrace$, $1\leq j\leq n$.  By definition of $\lambda^M$, we have $\pi^{Z}(K\cap V)\subseteq \bigcup_{j=1}^{n-1}E_j^M$.  Then $\bigcup_{j=1}^{n-1} \big( \pi^{-1}_Z(E^M_j) \cap V\big)\supseteq K\cap V$, here $\pi^{-1}_Z(E^M_j)$ is the preimage of $E^M_j$.

Since $\eta^M_{jj}\geq \frac{M}{n-1}$ on each $\pi^{-1}_Z(E^M_j) \cap V$,  $\eta^M$ satisfies all conditions in the definition of Property $(P_{n-1}^\#)$ on each neighborhood of $\pi^{-1}_Z(E^M_j) \cap V$.  Note that the number $n-1$ is immaterial, because $M$ is arbitrarily big.  Now since $\bigcup_{j=1}^{n-1} \big( \pi^{-1}_Z(E^M_j) \cap V\big)\supseteq K\cap V$ by the previous paragraph, we can apply Property $(P_{n-1}^\#)$ together with the partition of unity to prove the compactness estimate locally on $V$.  Now for the strongly pseudoconvex points on $V$, they are naturally of D'Angelo's finite type and hence compactness estimate holds there (see \cite{cat5}, \cite{chenshaw} or \cite{stra1}).  Since compactness of the $\dbar$-Neumann operator is a local property, the conclusion follows.
\end{proof}

\section{The general case for $(0,q)$-forms}\label{v2_sec7}

In this section we define Property $(P_q^\#)$ and Property $(\widetilde{P}_{q}^\#)$, then prove that they imply compactness of $N_q$ on the associated domain.

\begin{defn}\label{defn_pqnew} Given a smooth bounded pseudoconvex domain $\Omega\subset\mathbb{C}^n$ ($n>2$), let $s$ be a fixed integer ($1\leq s \leq n-2$) and \eqref{eqn42} holds for $q_0$ in the context of Lemma \ref{lemma_pq_2}.

(1) $b\Omega$ has Property $(P_{q}^\#)$ for $q\geq q_0$ if there exists a finite cover  $\lbrace V_j\rbrace_{j=1}^N$ of $b\Omega$ with special boundary charts and the following holds on each $V_j$:  for any $M>0$, there exist a neighborhood $U$ of $b\Omega$, a $C^2$ smooth function $\lambda$ on $U\cap V_j$ and an ordered index set $I_s=\lbrace j_k,1\leq k\leq s\rbrace\subset\lbrace 1,\cdots,n-1\rbrace$, such that (i) $0\leq\lambda(z)\leq 1$ and (ii) \[ \sumprime_{|K|=q-1}\sum_{j,k}\lambda_{jk}(z)w_{jK}\overline{w_{kK}}-\sumprime_{|J|=q}\sum_{j \in I_s}\lambda_{jj}(z)|w_{J}|^2 \geq M|w|^2\] for any  $z\in U\cap V_j$ and any $(0,q)$-form $w$. 

(2) $b\Omega$ has Property $(\widetilde{P}_{q}^\#)$ for $q\geq q_0$ if there exists a finite cover $\lbrace V_j\rbrace_{j=1}^N$ of $b\Omega$ with special boundary charts  and the following holds on each $V_j$:  for any $M>0$, there exist a neighborhood $U$ of $b\Omega$ , a $C^2$ smooth function $\lambda$ on $U\cap V_j$ and an ordered index set $I_s=\lbrace j_k,1\leq k\leq s\rbrace\subset\lbrace 1,\cdots,n-1\rbrace$, such that:
\begin{enumerate}[(i)]
\item $\displaystyle\sumprime_{|J|=q}\sum_{j\leq n}|L_j\lambda(z)w_{J}|^2$ 

$\leq \tau \displaystyle\Bigl(\sumprime_{|K|=q-1}\sum_{j,k}\lambda_{jk}(z)w_{jK}\overline{w_{kK}}-\sumprime_{|J|=q}\sum_{j\in I_s}\lambda_{jj}(z)|w_{J}|^2\Bigr)$,
\item $\displaystyle\sumprime_{|K|=q-1}\sum_{j,k}\lambda_{jk}(z)w_{jK}\overline{w_{kK}}-\sumprime_{|J|=q}\sum_{j\in I_s}\lambda_{jj}(z)|w_{J}|^2 \geq M|w|^2$,
\end{enumerate} 
for any $z\in U\cap V_j$ and any $(0,q)$-form $w$. Here the constant $\tau>0$ is independent of $M$.
\end{defn}

\begin{rem}  
(1)  Take $s=n-2$, the above definitions coincide with Property $(P_{n-1}^\#)$ and Property $(\widetilde{P}_{n-1}^\#)$ on the tangential part of $w$.

(2) By taking $w=w_{J}\overline{\omega}_J$ with a fixed tuple $J=(j_1,j_2,\cdots,j_q)$, one can compare the complex Hessian condition in Property $\pq$ and Property $(P_{q}^\#)$ as follows.

In Property $\pq$, the complex Hessian condition means that \[\sumprime\limits_{|K|=q-1}\sideset{}{}\sum\limits_{j,k=1}^n \lambda_{jk}(z) w_{jK}\overline{w_{kK}}\geq M|w|^2.\]  Since we take $w=w_{J}\overline{\omega}_J$, a simple calculation shows that $\sum_{t=1}^q \lambda_{j_t,j_t}(z)\geq M$ in this case.

In Property $(P_{q}^\#)$, the complex Hessian condition is modified to \[\sumprime_{|K|=q-1}\sum_{j,k}\lambda_{jk}(z)w_{jK}\overline{w_{kK}}-\sumprime_{|J|=q}\sum_{j\in I_s}\lambda_{jj}(z)|w_{J}|^2 \geq M|w|^2.\] A direct calculation shows that $\sum_{j_t\not\in I_s}\lambda_{j_t,j_t}(z)\geq M$ in this case.
\end{rem}

In the context of Lemma \ref{lemma_pq_2}, a parallel argument as in section \ref{v2_sec4} and \ref{v2_sec5} immediately gives the following two generalized theorems.  
\begin{thm}\label{gthm}
Let $\Omega\subset\mathbb{C}^n$ $(n>2)$ be a smooth bounded pseudoconvex domain.  For a fixed $s$ $(1\leq s \leq n-2)$ and the associated $q_0$, if $b\Omega$ has Property $(P_{q}^\#)$ for $q\geq q_0$, then the $\dbar$-Neumann operator $N_{q}$ is compact on $L^2_{(0,q)}(\Omega)$.
\end{thm}

\begin{proof}
We briefly outline the proof and address the difference from the case of $q=n-1$.  Without loss of generality, we may take the index set $I_s$ in the inequality (\ref{eqn42}) to be $I_s=\lbrace 1,\cdots,s\rbrace$ by an exchange of complex tangent basis.  It is clear that with $s$ and $q_0$ above, the third line in the estimate of (\ref{ahnzam1}) is nonnegative.  To estimate the second line in the estimate (\ref{ahnzam1}), we again take the two sums running over the indices of the tangential part of $u$ and assume $u$ is supported in a special boundary chart of $b\Omega$:
\begin{eqnarray}\label{433}
&&\sideset{}{'}\sum_{|\widetilde{K}|=q-1}\ \sum_{j,k=1}^{n-1}\int_\Omega\varphi_{jk} u_{j\widetilde{K}} \overline{u_{k\widetilde{K}}}e^{-\varphi}~dV-\sideset{}{'}\sum_{|\widetilde{J}|=q}\ \sum_{j\leq s}\int_\Omega\varphi_{jj}|u_{\widetilde{J}}|^2 e^{-\varphi}~dV \nonumber\\
&& \geq \int_\Omega M |u_\textrm{Tan}|^2 e^{-\varphi}~dV,
\end{eqnarray}

where $M$ denotes the lower bound in the definition of $(P_{q}^\#)$.  Use the estimate (\ref{433}) instead of the estimate (\ref{41}) to estimate the tangential part of $u$.  The rest of the argument only relies on the estimate on the normal part of $u$, hence the theorem follows by using classical elliptic regularity arguments and partition of unity.
\end{proof}

\begin{thm}\label{v2-last-thm}
Let $\Omega\subset\mathbb{C}^n$ $(n>2)$ be a smooth bounded pseudoconvex domain.  For a fixed $s$ $(1\leq s \leq n-2)$ and the associated $q_0$, if $b\Omega$ has Property $(\widetilde{P}_{q}^\#)$ for $q\geq q_0$, then the $\dbar$-Neumann operator $N_{q}$ is compact on $L^2_{(0,q)}(\Omega)$.
\end{thm}
\begin{proof}
We only point out the different steps from the case of $q=n-1$.  For the fixed $s$, $q_0$ and $I_s$, we claim that Property $(\widetilde{P}_{q}^\#)$ implies Property $(\widetilde{P}_{q+1}^\#)$.  By  Schur majorization theorem, if (ii) of Property $(\widetilde{P}_{q}^\#)$ holds, then the $q$-th smallest eigenvalue of the complex Hessian $(\lambda_{jk})$ must be positive.  This implies that the $(q+1)$-th smallest eigenvalue of $(\lambda_{jk})$ must be positive as well.  Hence (ii) of Property $(\widetilde{P}_{q}^\#)$ still holds for the case of $q+1$ with the same function family $\lambda$, by \lemref{0lemma7}.

To prove (i) in the definition holds for the case of $q+1$ with the same function family $\lambda$, let $|w|^2:=\sum_{|J|=q}'|w_J|^2$ for any $(0,q)$-form $w$.  Then (i) in Property $(\widetilde{P}_{q}^\#)$ implies that:
\begin{equation}\label{pqnewt1}
\sumprime_{|K|=q-1}\sum_{j,k}\lambda_{jk}(z)w_{jK}\overline{w_{kK}}\geq |w|^2\Big[ 
\sum_{j\in I_s}\lambda_{jj}(z)+\frac{1}{\tau}\sum_{j\leq n}|L_j\lambda(z)|^2
 \Big].
\end{equation}

Since we proved that the $(q+1)$-th smallest eigenvalue of $(\lambda_{jk})$ must be positive, by \lemref{0lemma7} again, inequality \eqref{pqnewt1} still holds for any $(0,q+1)$-form $w$ by changing $K$ to any $q$-tuple.  Hence for any $(0,q+1)$-form $w$, we have
\[
\sumprime_{|K|=q}\sum_{j,k}\lambda_{jk}(z)w_{jK}\overline{w_{kK}}\geq |w|^2\Big[ 
\sum_{j\in I_s}\lambda_{jj}(z)+\frac{1}{\tau}\sum_{j\leq n}|L_j\lambda(z)|^2
 \Big],
\]
which is equivalent to (i) in the definition for the case of $q+1$.  Therefore, Property $(\widetilde{P}_{q}^\#)$ implies Property $(\widetilde{P}_{q+1}^\#)$.  By Proposition \ref{new_section1_1}, we need to prove compactness of $\dbarad N_q$ and $\dbarad N_{q+1}$ on the respective kernel space.  Since Property $(\widetilde{P}_{q}^\#)$ implies Property $(\widetilde{P}_{q+1}^\#)$, it suffices to prove compactness of $\dbarad N_q$.  Therefore, the rest of the argument focuses on  proving compactness of $(\dbarad N_q)^*$ on $\ker(\dbar)^\bot$.

Suppose $u=\sum_J' u_J\bar{\omega}_J\in C^\infty_{(0,q)}(\bar{\Omega})\cap\dom(\dbarad)$ with support in one of the special boundary chart $V_j$ in the definition of Property $(\widetilde{P}_q^\#)$.  Take $\gamma=\frac{1}{\tau}-q-4$, $\phi=\lambda$ in Proposition \ref{v2_tildeprop}.  Apply part (i) in the definition of Property $(\widetilde{P}_{q}^\#)$ first, then apply part (ii) in the same definition and we have: 
\begin{flalign}\label{v2_pqt_compact}
& ||\dbar u||^2_{2\lambda}+(1+\frac{\tau}{1-\tau (q+4)})||\overline{\partial}^*_{\lambda} u||^2_{2\lambda}+C_\tau ||u||^2_{2\lambda}  \nonumber\\
&\geq \int_\Omega e^{-2\phi}\Big(  \sumprime_{|K|=q-1}\sum_{i,j}\lambda_{ij}u_{iK}\overline{u_{jK}}-\sumprime_{|J|=q}\sum_{j \in I_s}\lambda_{jj}|u_{J}|^2  \Big)~dV \nonumber\\
&\geq M||u||^2_{2\lambda}.
\end{flalign}
Now use the last equality in \eqref{v2_dbarad_pqt} (replacing $|K|=n-2$ with $|K|=q-1$, then $e^{-\lambda}\dbaradl u=\dbarad(e^{-\lambda}u)$ still holds here). Note that $\tau$ can be made arbitrarily small and is independent of $M$, so we take $M$ sufficiently big and hence \eqref{v2_pqt_compact} becomes:
\[||u||_{2\lambda}^2\leq \frac{C}{M}(||e^{-\lambda}\dbar u||_0^2+||\dbarad(e^{-\lambda}u)||_0^2).\]
In general, by using the partition of unity and the interior elliptic regularity argument we have for any $(0,q)$-form $u\in \ker(\dbar)\cap\dom(\dbarad)$:
\[
||e^{-\lambda}u||^2_0 \leq \frac{C_2}{M}||\dbarad(e^{-\lambda} u)||^2_0+ C_M ||e^{-\lambda}u||^2_{-1}.
\]
To correct the movement of the weighted space, we again use Straube's argument by defining the weighted Bergman projection $P_{q,\lambda}$ as the orthogonal projection onto $\ker(\dbar)$ under the weighted inner product.  The argument now is verbatim along the proof of \thmref{tildethm} by replacing $(0,n-1)$-forms with $(0,q)$-forms.
\end{proof}

\begin{rem}
(1)  By the proof of Theorem \ref{v2-last-thm}, Property $(\widetilde{P}_{q}^\#)$ $\Rightarrow$ Property $(\widetilde{P}_{q+1}^\#)$ and similarly Property $({P}_{q}^\#)$ $\Rightarrow$ Property $({P}_{q+1}^\#)$.

(2)  Whether there exists an analogous result of \thmref{mainthm} is not clear at this moment for $1<q<n-1$.  In such cases, a certain arrangement on projections onto each $q$-dimensional subspace in the complex tangent space needs to be found.
\end{rem}

\end{document}